\documentclass{jonasart}

\DeclareMathOperator{\ch}{char}
\DeclareMathOperator{\id}{id}

\title{$\sigma$-matching and interchangeable structures on truncated polynomial algebras}

\authors{Kobiljon Abdurasulov, Jobir Adashev and Feruza Toshtemirova}

\authorinfo[K. Abdurasulov]{CMA-UBI, University of Beira Interior, Portugal; Romanovsky Institute of Mathematics, Uzbekistan}{abdurasulov0505@mail.ru}

\authorinfo[J. Adashev]{Romanovsky Institute of Mathematics, Uzbekistan; Chirchiq State Pedagogical University, Uzbekistan}{adashevjq@mail.ru}

\authorinfo[F. Toshtemirova]{Romanovsky Institute of Mathematics, Uzbekistan}{feruzaisrailova45@gmail.com}

\abstract{We describe $\sigma$-matching, interchangeable and, as a consequence, totally compatible products on truncated polynomial algebras.}

\keywords{associative algebra, compatible structure, $\sigma$-matching structure, totally compatible structure, interchangeable structure.}

\msc{17A30 (primary); 16P10 (secondary)}

\acknowledgments{The work is supported by FCT 2023.08952.CEECIND and UID/00212/2025.}

\VOLUME{1}
\ISSUE{1}
\NUMBER{9}
\DOI{https://doi.org/10.46298/jonas.17682}
\editinfo{March 10, 2026}{August 25, 2026}{Maxime Fairon}

\begin{document}

	\section*{Introduction}

	The study of algebraic structures equipped with multiple compatible products has become a~central theme in non-associative algebras, with deep connections to mathematical physics, integrable systems, operad theory, and deformation theory. In particular, the investigation of algebras admitting two bilinear operations $\cdot$ and $\star$ -- satisfying specific compatibility conditions has led to the development of a~rich theory encompassing compatible associative algebras~\cite{Odesskii-Sokolov06, Dotsenko09}, compatible Lie algebras~\cite{Golubchik-Sokolov02, Golubchik-Sokolov05}, compatible pre-Lie algebras~\cite{Abdelwahab-Kaygorodov-Makhlouf24}, Rota–Baxter algebras~\cite{U, zgg20}, and dendriform structures. Among these, the concepts of $\sigma$-matching, interchangeable, and totally compatible products provide a~systematic algebraic framework for describing how two operations can coherently coexist on the same vector space.

	The notion of a~pair of compatible Poisson or Lie brackets seems first to have appeared in mathematical physics~\cite{Magri78,Reyman-Semenov89, Bolsinov91}. Odesskii and Sokolov investigated this structure extensively and obtained significant results~\cite{Odesskii-Sokolov06, Odesskii-Sokolov08}.

	At a~more abstract level, compatible operations have also been investigated within the framework of operad theory. In particular, Dotsenko and Khoroshkin computed the dimensions of the graded components of the operads associated with pairs of compatible Lie brackets and compatible Poisson brackets~\cite{Dotsenko-Khoroshkin07}. Algebras equipped with two matching associative products, known as matching dialgebras, together with their relations to other classes of algebras, were studied in~\cite{zbg12}. In addition, totally compatible associative products and the corresponding operads were examined in~\cite{zbg13,Zhang13}. Subsequently, the notions of compatibility, matching compatibility, and total compatibility were extended to families of specific algebras, including associative algebras~\cite{zgg20}. Later, these ideas were further generalized to families of algebras defined over arbitrary operads~\cite{zgg24}.

	In the study of compatible structures, compatibility can be evaluated in two ways. For varieties whose signature naturally contains multiple operations (such as Poisson algebras), one can consider compatibility across all operations simultaneously~\cite[Definition A]{Strohmayer08}. Conversely, for a~variety defined by a~single operation—such as the variety of associative algebras—one can introduce an additional product that is compatible with the original fixed operation. This latter approach has been investigated for general associative algebras~\cite{Khr}, the strictly upper triangular matrix algebra~\cite{Khr1}, and the Jacobson radical of incidence algebras~\cite{Khr2}. Moreover, some classes of compatible algebras, as well as low-dimensional compatible algebras, have been classified~\cite{aae24, Ladra-Cunha-Lopes24}.

	Let $V$ be a~vector space over a~field $K.$ Two associative products $\cdot$ and $\star$ on $V$ are called compatible if
	\begin{align*}
		(a\cdot b)\star c + (a\star b)\cdot c= a~\cdot(b\star c)+a\star (b\cdot c).
	\end{align*}
	Let us denote the terms on the left-hand side and the right-hand side by $(i), (ii), (iii)$ and $(iv),$ respectively. In general, compatible associative algebras are well studied. We list some of their subclasses. If $(i) = (iii)$ and $(ii) = (iv)$, then this subclass is called $\id$-matching. If $(i) = (iv)$ and $(ii) = (iii),$ then it is called $(12)$-matching. If all the expressions coincide, namely $(i) = (ii) = (iii) = (iv),$ then this subclass is called totally compatible.

	This article focuses on these compatibility structures within a~very concrete and well-understood class of algebras: truncated polynomial algebras over the complex field $\mathbb{C}$~\cite{MO}. These algebras are, in a~sense, the simplest non-trivial nilpotent associative algebras: they are characterized by having maximal nilpotency index relative to their dimension. Up to isomorphism, the $n$-dimensional truncated polynomial algebra $\mathbb{C}_n [x]=\mathbb{C}[x]\big/(x^{n+1})$ has a~canonical basis $\{e_1,\dots,e_n\}$ with multiplication $e_i \cdot e_j = e_{i+j}$ (when $i+j \leq n$) and all other products zero. In some articles, this algebra is named null-filiform associative algebra.

	In general, filiform algebras have been thoroughly investigated in numerous varieties of algebras; see, for example, filiform left-symmetric algebras~\cite{DEKIMPE}, Rota-type operators on null-filiform associative algebras~\cite{KKL}, and central extensions of null-filiform associative algebras~\cite{KLP}. In addition, two-operation algebras whose one operation is isomorphic to a~filiform algebra have recently attracted considerable attention; see, for instance, in the study of transposed $\delta$-Poisson algebras~\cite{ABF,DEF}.

	The fact that $\id$-matching, interchangeable, and totally compatible structures coincide on $(\mathbb{C}_n [x], \cdot)$ is a~distinctive feature of this class and underscores the strong constraints imposed by the underlying algebra’s structure.

	The paper is organized as follows. Section~\ref{prelim} provides the necessary definitions and fundamental results concerning associative algebras. Section~\ref{com_a lgebra} presents general results concerning the characterization of $\sigma$-matching and interchangeable structures in associative commutative algebras. Section~\ref{id-inter-total} classifies interchangeable (hence $\id$-matching and totally compatible) structures
	(see Theorem~\ref{int}). Section~\ref{12-matching-section} classifies $(12)$-matching structures (see Theorem~\ref{12-matching-algebras}).

	\section{Preliminaries}\label{prelim}

	\begin{definition}
		Two associative products $\cdot$ and $\star$ on $V$ are called compatible if
		\begin{align*}
			(a\cdot b)\star c + (a\star b)\cdot c= a~\cdot(b\star c)+a\star (b\cdot c).
		\end{align*}
	\end{definition}

	\begin{definition}
		Adopting the terminology of \textrm{\cite{zgg23}}, we say that two associative products $\cdot$ and $\star$ on $V$ are
		\begin{enumerate}
			\item \textit{$id$-matching}, if
			\begin{align}\label{id-ass-identity}
				(a\cdot b)\star c = a\cdot(b\star c) \text{ and } (a\star b)\cdot c = a\star(b\cdot c),
			\end{align}
			\item \textit{$(12)$-matching}, if
			\begin{align}\label{matching-ass-identity}
				(a\cdot b)\star c = a\star(b\cdot c) \text{ and } (a\star b)\cdot c = a\cdot(b\star c),
			\end{align}
			\item \textit{totally compatible}, if
			\begin{align}
				(a\cdot b)\star c = (a\star b)\cdot c = a\cdot (b\star c) = a\star (b\cdot c),
			\end{align}
		\end{enumerate}
		for all $a,b,c\in V$.
	\end{definition}

	\begin{definition}
		We say that two (not necessarily associative) products $\cdot$ and $\star$ on $V$ are \textit{interchangeable} if
		\begin{align}\label{interchangeable-ass-identity}
			(a\cdot b)\star c = (a\star b)\cdot c \text{ and } a\cdot(b\star c) = a\star (b\cdot c).
		\end{align}
	\end{definition}

	Note that any totally compatible products are interchangeable and $\sigma$-matching for every $\sigma \in S_2$. However, the converse does not hold in general
	(see \cite[Examples 1.4 and 1.5]{Khr}).

	The following examples show that the classes of $\id$-matching, $(12)$-matching and interchangeable structures on $(A,\cdot)$ are different and no class is in general contained in another one (although all of them contain the totally compatible structures).

	\begin{example}
		Let $A$ be the $4$-dimensional nilpotent associative algebra with multiplication table $e_1\cdot e_2 =e_4$. The product $*$ on $A$ given by $e_1 *e_1 =e_1$ and $e_1 *e_4 =e_4$ is clearly associative. Then,
		\begin{enumerate}
			\item $\cdot$ and $*$ are $id$-matching.
			\item $\cdot$ and $*$ are not $(12)$-matching:
			$e_4 =e_1\cdot e_2 =(e_1 * e_1 ) \cdot e_2 \neq e_1\cdot (e_1 * e_2 )=0. $
			\item $\cdot$ and $*$ are not interchangeable:
			$e_4 =e_1\cdot e_2 =(e_1 * e_1 )\cdot e_2 \neq (e_1\cdot e_1 ) * e_2 =0.$
		\end{enumerate}
	\end{example}

	\begin{example}
		Let $A$ be the $4$-dimensional nilpotent associative algebra with multiplication table $e_1\star e_3 =e_4$. The product $\circ$ on $A$ given by $e_1 \circ e_2 =e_3,$ $e_1\circ e_3 =e_4$. Then,
		\begin{enumerate}
			\item $\star$ and $\circ$ are $(12)$-matching.
			\item $\star$ and $\circ$ are not $id$-matching:
			$0=(e_1 \star e_1 ) \circ e_2 \neq e_1 \star (e_1 \circ e_2 )=e_1 \star e_3 = e_4. $
			\item $\star$ and $\circ$ are not interchangeable:
			$e_4 = e_1 \star e_3 = e_1 \star (e_1 \circ e_2 ) \neq e_1 \circ (e_1 \star e_2 )=0.$
		\end{enumerate}
	\end{example}

	\begin{example}
		Let $A$ be the 6-dimensional associative algebra with multiplication table $e_1\cdot e_5 =e_2,$ $e_5\cdot e_6 =e_3,$ $e_1\cdot e_3 =e_4,$ $e_2\cdot e_6 =e_4.$ The product $\ast$ on $A$ given by $e_1 \ast e_5 = e_3,$ $e_1 \ast e_2 = e_4.$
		\begin{enumerate}
			\item $\cdot$ and $\ast$ are interchangeable.
			\item $\cdot$ and $\ast$ are not $id$-matching:
			$0= (e_1\ast e_1 )\cdot e_5 \neq e_1\ast (e_1\cdot e_5 )=e_1\ast e_2 =e_4.$
			\item $\cdot$ and $\ast$ are not $(12)$-matching:
			$0=(e_1\cdot e_1 )\ast e_5 \neq e_1\ast (e_1\cdot e_5 )=e_1\ast e_2 =e_4.$
		\end{enumerate}
	\end{example}

	\begin{remark}
		For two associative products $\cdot$ and $\star$ on $V$ the following are equivalent:
		\begin{enumerate}
			\item[$\textrm{(1)}$] $\cdot$ and $\star$ are totally compatible;
			\item[$\textrm{(2)}$] $\cdot$ and $\star$ are interchangeable and at least one of the two equalities~\ref{id-ass-identity} and~\ref{matching-ass-identity} holds for some $\sigma\in S_2$;
			\item[$\textrm{(3)}$] $\cdot$ and $\star$ are $\sigma$-matching for some $\sigma\in S_2$ and at least one of the two equalities~\ref{interchangeable-ass-identity} holds;
			\item[$\textrm{(4)}$] $\cdot$ and $\star$ are $\sigma_1$-matching for some $\sigma_1\in S_2$ and at least one of the two equalities~\ref{id-ass-identity} and~\ref{matching-ass-identity} holds for $\sigma\ne\sigma_1$;
		\end{enumerate}
		If one works over a~base field K such that $\ch(K)\ne 2$, then each of the conditions $(1)-(4)$ is also equivalent to
		\begin{enumerate}
			\setcounter{enumi}{4}
			\item[$\textrm{(5)}$] $\cdot$ and $\star$ are interchangeable and compatible.
		\end{enumerate}
	\end{remark}

	For any associative algebra $(A,\cdot)$ and fixed $x\in A$, one defines the \textit{mutation of $\cdot$ by $x$} (see~\cite{ElduqueMyung}) to be the product $\cdot_x$ on $A$ given by $a\cdot_x b=a\cdot x\cdot b$ for all $a,b\in A$. The following should be well-known (cf.~\cite[Formula (9)]{Carinena-Grabowski-Marmo2000} and~\cite[Example 2.2]{Odesskii-Sokolov06}), but we couldn't find an explicit proof.

	\begin{lemma}[see~\cite{Khr}]

		Let $(A,\cdot)$ be an associative algebra. For any $x\in A$ the product $\cdot_x$ is associative and $\id$-matching with $\cdot$.
	\end{lemma}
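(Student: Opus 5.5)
The plan is to verify each required identity directly from the definition $a\cdot_x b = a\cdot x\cdot b$, using only the associativity of $\cdot$. Because $\cdot$ is associative, every iterated product of elements of $A$ under $\cdot$ can be written without parentheses. So each identity reduces to comparing two words in $a,b,c,x$.

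First, associativity of $\cdot_x$. For $a,b,c\in A$ we expand both sides:
\begin{align*}
(a\cdot_x b)\cdot_x c &= (a\cdot x\cdot b)\cdot x\cdot c = a\cdot x\cdot b\cdot x\cdot c,\\
a\cdot_x (b\cdot_x c) &= a\cdot x\cdot (b\cdot x\cdot c) = a\cdot x\cdot b\cdot x\cdot c.
\end{align*}
The two expressions agree, so $\cdot_x$ is associative.

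Second, the two $\id$-matching identities (\ref{id-ass-identity}) with $\star=\cdot_x$. For the first identity,
\begin{align*}
(a\cdot b)\cdot_x c &= a\cdot b\cdot x\cdot c,\\
a\cdot(b\cdot_x c) &= a\cdot b\cdot x\cdot c.
\end{align*}
For the second identity,
\begin{align*}
(a\cdot_x b)\cdot c &= a\cdot x\cdot b\cdot c,\\
a\cdot_x(b\cdot c) &= a\cdot x\cdot b\cdot c.
\end{align*}
In each case the two sides coincide, so both identities hold for all $a,b,c\in A$.

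There is no real obstacle here; the only point to state carefully is that generalized associativity of $\cdot$ justifies dropping all parentheses. It is worth remarking that no commutativity and no unit are used, so the argument holds for an arbitrary associative algebra over any field.
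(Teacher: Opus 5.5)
Your proposal is correct: by associativity of $\cdot$, the three identities reduce respectively to the equalities of the words $a\cdot x\cdot b\cdot x\cdot c$, $a\cdot b\cdot x\cdot c$ and $a\cdot x\cdot b\cdot c$, as you show. The paper gives no proof of this lemma and simply defers to the cited reference, so your direct computation is the natural argument it relies on.
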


	\begin{proposition}[see~\cite{Khr}]\label{id=mutation}
		Let $(A,\cdot)$ be a~unital associative algebra. Then the $\id$-matching structures on $A$ are exactly the mutations of $\cdot$.
	\end{proposition}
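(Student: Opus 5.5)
The plan is to prove two inclusions. By the preceding lemma, every mutation $\cdot_x$ is associative and $\id$-matching with $\cdot$. So the real content is the converse: every associative product $\star$ that is $\id$-matching with $\cdot$ on a unital algebra $(A,\cdot)$ with unit $1$ has the form $\star=\cdot_x$ for some $x\in A$.

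For the converse, the natural candidate is $x:=1\star 1$. I would use the two identities in \eqref{id-ass-identity} with the unit inserted in suitable slots.

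\begin{enumerate}
\item Take the first identity $(a\cdot b)\star c=a\cdot(b\star c)$ with $b=1$. This gives $a\star c=a\cdot(1\star c)$ for all $a,c$.
\item Take the second identity $(a\star b)\cdot c=a\star(b\cdot c)$ with $b=1$. This gives $(a\star 1)\cdot c=a\star c$.
\item Apply step 2 with $a=1$: $1\star c=(1\star 1)\cdot c=x\cdot c$.
\item Substitute this into step 1: $a\star c=a\cdot x\cdot c=a\cdot_x c$.
\end{enumerate}

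Hence $\star=\cdot_x$. Associativity of $\star$ is not even needed for this direction; it is automatic from the lemma.

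The only point needing care is getting the orientation of each identity right, so that the unit can be placed in the middle slot $b$. I expect no genuine obstacle. Unitality is essential here: in the non-unital setting, e.g.\ the truncated polynomial algebras studied later, this argument fails and a separate analysis is needed.
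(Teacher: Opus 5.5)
Your proof is correct: the preceding lemma gives one inclusion, and for the converse, setting $x=1\star 1$ and putting the unit in the middle slot of both $\id$-matching identities gives $1\star c=x\cdot c$, hence $a\star c=a\cdot(1\star c)=a\cdot x\cdot c$. The paper does not prove this proposition itself and just cites \cite{Khr}, but your argument is essentially the same unit-insertion trick (with $x=e\star e$) that the paper uses in its Section~2 lemmas showing that interchangeable and $(12)$-matching structures on unital commutative algebras are mutations.
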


	\begin{definition}
		The polynomial algebra $\mathbb{C}[x]$ is defined as
		\[
			\mathbb{C}[x] = \left\{ \sum_{i \in \mathbb{N} } a_i x^i \;\middle|\; \quad a_i \in \mathbb{C} \right\},
		\]
		where $x$ is an indeterminate.
	\end{definition}

	\begin{definition}
		A truncated polynomial algebra over $\mathbb{C}$ is defined as the quotient algebra
		\[
			\mathbb{C}_n [x]=\mathbb{C}[x]\big /(x^{n+1}), \quad n \in \mathbb{N},
		\]
		where $(x^{n+1})$ is the ideal generated by $x^{n+1}.$
	\end{definition}

	\section{\texorpdfstring{$\sigma$-matching and interchangeable structures on commutative algebras}{sigma-matching and interchangeable structures on commutative algebras}}\label{com_a lgebra}

	Throughout the paper, all vector spaces and algebras are finite-dimensional and over the complex field $\mathbb{C}$ unless otherwise stated.

	\begin{lemma}\label{id=interchangeable=total}
		Let $\cdot$ and $\star$ be associative products that are $\id$-matching. If one of them is commutative, then they are totally compatible and, consequently, $(12)$-matching and interchangeable.
	\end{lemma}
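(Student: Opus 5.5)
Assume first that $\cdot$ is commutative; the case where $\star$ is commutative is symmetric, since the $\id$-matching identities \eqref{id-ass-identity} are invariant under swapping $\cdot$ and $\star$. The goal is to show that the four expressions $(a\cdot b)\star c$, $(a\star b)\cdot c$, $a\cdot(b\star c)$ and $a\star(b\cdot c)$ coincide for all $a,b,c$. By \eqref{id-ass-identity} we already have
\[
(a\cdot b)\star c=a\cdot(b\star c)\quad\text{and}\quad (a\star b)\cdot c=a\star(b\cdot c).
\]
It therefore suffices to link one expression from the first equality with one from the second, for example to prove $(a\cdot b)\star c=(a\star b)\cdot c$.

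To link them, I will use commutativity of $\cdot$ to move factors across $\cdot$ and then apply the matching identities repeatedly. The chain is
\[
(a\star b)\cdot c=c\cdot(a\star b)=(c\cdot a)\star b=(a\cdot c)\star b=a\cdot(c\star b)=(c\star b)\cdot a .
\]
The steps are, in order: commutativity, the first identity of \eqref{id-ass-identity}, commutativity, the first identity again, and commutativity. Hence $F(a,b,c):=(a\star b)\cdot c$ satisfies $F(a,b,c)=F(c,b,a)$.

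Next I will use the second identity, $(a\star b)\cdot c=a\star(b\cdot c)=a\star(c\cdot b)=(a\star c)\cdot b$, which gives $F(a,b,c)=F(a,c,b)$. The transpositions $(13)$ and $(23)$ generate $S_3$, so $F$ is fully symmetric in its three arguments.

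Finally, $(a\cdot b)\star c=a\cdot(b\star c)=(b\star c)\cdot a=F(b,c,a)=F(a,b,c)=(a\star b)\cdot c$. Combined with \eqref{id-ass-identity}, this shows all four expressions are equal, i.e.\ the products are totally compatible. The remaining claims, that they are $(12)$-matching and interchangeable, follow directly from the definitions, as noted after the definition of interchangeable products.

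The main obstacle is finding the right chain of substitutions. Once the symmetry of $F$ under $(13)$ and $(23)$ is established, the rest is bookkeeping. If $\star$ is the commutative product instead, the same argument goes through with the roles of the two products interchanged.
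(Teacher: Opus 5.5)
Your proof is correct and uses the same mechanism as the paper's: reorder factors using commutativity of $\cdot$, then apply the two $\id$-matching identities to connect an expression from the first identity with one from the second. The paper does this in a single chain, $(a\cdot b)\star c=(b\cdot a)\star c=b\cdot(a\star c)=(a\star c)\cdot b=a\star(c\cdot b)=a\star(b\cdot c)$, so your detour through the full $S_3$-symmetry of $F(a,b,c)=(a\star b)\cdot c$ is valid but not needed.
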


	\begin{proof}
		Let associative products $\cdot$ and $\star$ be $\id$-matching, i.e.,
		\[
			(a\cdot b)\star c=a\cdot (b\star c), \quad (a\star b)\cdot c=a\star (b\cdot c).
		\]
		Assume that one of the products is commutative. Without loss of generality, let $\cdot$ be commutative. Then
		\[
			(a\cdot b)\star c = (b\cdot a)\star c
			=b\cdot (a\star c)=(a\star c) \cdot b
			= a\star (c\cdot b)
			=a\star (b\cdot c).
		\]
		Combining this with the $\id$-matching identities, we obtain
		\[
			(a\cdot b)\star c = a\cdot (b\star c) = (a\star b)\cdot c = a\star (b\cdot c).
		\]
		Hence, $\cdot$ and $\star$ are totally compatible. The remaining properties follow immediately.
	\end{proof}

	\begin{lemma}\label{Interchangeable_m utation} Let $\cdot$ and $\star$ be associative products on a~unital algebra that are interchangeable structures. If $\cdot$ is the commutative product of the unital algebra, then $\cdot$ and $\star$ are $\id$-matching.
	\end{lemma}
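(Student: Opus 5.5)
Our plan is to use the unit $1$ of $(A,\cdot)$ to turn the interchangeable identities into a description of $\star$ as a mutation, and then apply the earlier Lemma that mutations are $\id$-matching. Let $1$ be the unit for $\cdot$, and put $x:=1\star 1\in A$.

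First, we specialize the first interchangeable identity $(a\cdot b)\star c=(a\star b)\cdot c$ to $a=1$, $b=1$. This gives $1\star c=(1\star 1)\cdot c=x\cdot c$ for all $c$. Next, we specialize the second identity $a\cdot(b\star c)=a\star(b\cdot c)$ to $b=c=1$, which gives $a\star 1=a\cdot(1\star 1)=a\cdot x$ for all $a$.

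Now we express a general product $a\star c$. We use the first identity with $b=1$: $a\star c=(a\cdot 1)\star c=(a\star 1)\cdot c=(a\cdot x)\cdot c=a\cdot x\cdot c$. Hence $\star=\cdot_x$ is the mutation of $\cdot$ by $x$. Associativity of $\cdot$ is all we use here. Commutativity is not even needed for this step, although the statement assumes it, and it makes the roles of left and right symmetric.

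Finally, we invoke the Lemma from \cite{Khr} stated above: for any $x\in A$, the product $\cdot_x$ is $\id$-matching with $\cdot$. We can also verify this directly: $(a\cdot b)\star c=a\cdot b\cdot x\cdot c=a\cdot(b\star c)$ and $(a\star b)\cdot c=a\cdot x\cdot b\cdot c=a\star(b\cdot c)$. This proves the claim.

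There is no real obstacle. The only point needing care is to choose the specializations in the right order, so that $a\star c$ reduces to an expression in $\cdot$ and the single element $x$. As a consequence, Lemma~\ref{id=interchangeable=total} then upgrades the conclusion to total compatibility.
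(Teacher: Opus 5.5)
Your proof is correct. It follows the paper's overall strategy: show that $\star$ is the mutation $\cdot_x$ with $x=1\star 1$, then use that mutations are $\id$-matching. The two arguments differ in how the mutation formula is obtained.

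\textbf{The paper's route.} It uses only the second interchangeable identity, applied twice, to get $a\star b=a\cdot(b\star 1)=a\cdot b\cdot(1\star 1)$. It then needs commutativity of $\cdot$ to rewrite $a\cdot b\cdot x$ as $a\cdot x\cdot b$.

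\textbf{Your route.} You combine the first identity with $b=1$, namely $a\star c=(a\star 1)\cdot c$, with the second identity at $b=c=1$, namely $a\star 1=a\cdot x$. This puts $x$ between $a$ and $c$ directly, using only associativity and the unit.

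\textbf{What your route buys.} Your argument proves the lemma for every unital associative $(A,\cdot)$, commutative or not; the commutativity hypothesis is superfluous. In that generality, the first identity with $a=c=1$ gives $b\star 1=1\star b$, i.e.\ $b\cdot x=x\cdot b$. So $x$ is automatically central, and the paper's expression $a\cdot b\cdot x$ agrees with yours.

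Two minor remarks. Your first specialization $1\star c=x\cdot c$ is never used and can be dropped. Your direct check that $\cdot_x$ is $\id$-matching makes the argument self-contained; only the easy direction of Proposition~\ref{id=mutation} is needed there, and it is exactly the preceding lemma on mutations.
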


	\begin{proof}
		Let associative products $\cdot$ and $\star$ be interchangeable structures on the unital algebra $A$ such that condition~\eqref{interchangeable-ass-identity} holds.

		Since the $\cdot$ is the commutative product of the unital algebra. For arbitrary $a,b \in A$, we use the unit property and interchangeability condition:
		\begin{center}
			$a\star b=a \star (b\cdot e)= a~\cdot (b\star e)= a~\cdot (b\star (e \cdot e))=a \cdot (b \cdot (e \star e))= a~\cdot (e \star e) \cdot b.$
		\end{center}
		Denoting $x:= e \star e$, we obtain
		\begin{center}
			$a \star b = a~\cdot x \cdot b = a~\cdot_x b,$
		\end{center}
		which shows that the product $\star$ is a~$\cdot_x$ mutation. By Proposition~\ref{id=mutation}, any such mutation yields $\id$-matching products. Therefore, $\cdot$ and $\star$ are $\id$-matching.
	\end{proof}

	\begin{lemma}\label{12_m utation}
		Let $\cdot$ and $\star$ be associative products on a~unital algebra that are $(12)$-matching. If $\cdot$ is the commutative product of the unital algebra, then $\cdot$ and $\star$ are $\id$-matching.
	\end{lemma}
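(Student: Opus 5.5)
The plan mirrors the proof of Lemma~\ref{Interchangeable_m utation}: show that $\star$ is a mutation of $\cdot$, then invoke Proposition~\ref{id=mutation}. Let $e$ be the unit of $(A,\cdot)$. The $(12)$-matching identities are
\[
(a\cdot b)\star c = a\star(b\cdot c), \qquad (a\star b)\cdot c = a\cdot(b\star c).
\]

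The first step is to put $a=e$ in the second identity. This gives
\[
(e\star b)\cdot c = b\star c \quad \text{for all } b,c.
\]
The second step is to put $c=e$ in the same identity. This gives
\[
a\star b = a\cdot(b\star e) \quad \text{for all } a,b.
\]
Applying the second formula with $a=e$ and then with $b=e$ yields $e\star b=b\star e$ and $e\star e = b\cdot$-independent data. Concretely, taking $b=e$ in the second formula gives $a\star e = a\cdot(e\star e)$.

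The third step sets $x:=e\star e$ and assembles these pieces:
\[
a\star b = a\cdot(b\star e) = a\cdot\bigl(b\cdot(e\star e)\bigr) = a\cdot b\cdot x .
\]
Commutativity and associativity of $\cdot$ let us rewrite this as $a\cdot x\cdot b = a\cdot_x b$. So $\star=\cdot_x$ is a mutation of $\cdot$.

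The last step applies Proposition~\ref{id=mutation}, or the preceding lemma that mutations are $\id$-matching. This shows that $\cdot$ and $\star$ are $\id$-matching. By Lemma~\ref{id=interchangeable=total} they are then in fact totally compatible.

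I expect no real obstacle. The only care needed is to use the correct one of the two $(12)$ identities with the unit substituted on each side, since $\star$ need not be unital. Associativity of $\star$ is not even needed, and it holds automatically for a mutation.
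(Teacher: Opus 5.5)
Your proof is correct and follows the same approach as the paper: substitute the unit to show $\star=\cdot_x$ with $x=e\star e$, then invoke Proposition~\ref{id=mutation} (really, the preceding lemma that mutations are $\id$-matching). The only difference is in the chain of identities. The paper's chain $a\star b=a\star(b\cdot e)=(a\cdot b)\star e=((a\cdot b)\star e)\cdot e=(a\cdot b)\cdot(e\star e)$ uses both $(12)$-identities, whereas you use only $(a\star b)\cdot c=a\cdot(b\star c)$, twice, which shows that this single identity already suffices.
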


	\begin{proof}
		Let associative products $\cdot$ and $\star$ be $(12)$-matching on the unital algebra $A,$ such that condition~\eqref{matching-ass-identity} is satisfied.

		Since the $\cdot$ is the commutative product of the unital algebra. For arbitrary $a,b \in A$, using the unit property and the $(12)$-matching identity, we compute
		\begin{center}
			$a\star b=a \star (b\cdot e)= (a \cdot b) \star e = ((a \cdot b) \star e) \cdot e = (a \cdot b) \cdot (e \star e)= a~\cdot (e \star e) \cdot b.$
		\end{center}
		Denoting $x:= e \star e$, we obtain
		\begin{center}
			$a \star b = a~\cdot x \cdot b = a~\cdot_x b,$
		\end{center}
		which shows that the product $\star$ is a~$\cdot_x$ mutation. By Proposition~\ref{id=mutation}, any such mutation yields $\id$-matching products. Therefore, $\cdot$ and $\star$ are $\id$-matching.
	\end{proof}

	\begin{remark}
		If $\cdot$ and $\star$ are $\id$-matching on commutative algebra, then they are totally compatible, $(12)$-matching, and interchangeable. The converse implication holds only in the class of unital commutative algebras.
	\end{remark}

	\begin{example}
		Let $A$ be the 3-dimensional associative commutative algebra $e_1\cdot e_1 =e_2,$ $ e_1\cdot e_2 =e_3,$ $e_2\cdot e_1 =e_3.$ The associative product $\star$ on $A$ given by $e_1 \star e_1 = e_1,$ $ e_1 \star e_2 = e_2 + e_3, $ $e_2 \star e_1 = e_2 + e_3.$
		\begin{enumerate}
			\item $\cdot$ and $\star$ are $(12)$-matching.
			\item $\cdot$ and $\star$ are not $id$-matching:
			$e_3 = e_1\cdot e_2 =(e_1\star e_1 )\cdot e_2 \neq e_1\star (e_1\cdot e_2 )=e_1\star e_3 =0.$
		\end{enumerate}
	\end{example}

	\begin{corollary}
		Let $A$ be a~unital associative and commutative algebra and $(A, \cdot, \star)$ be a~$\sigma$-matching and interchangeable structure on $A$. The following statements are equivalent:
		\begin{enumerate}
			\item $\cdot$ and $\star$ are $\id$-matching;
			\item $\cdot$ and $\star$ are $(12)$-matching;
			\item $\cdot$ and $\star$ are totally compatible;
			\item $\cdot$ and $\star$ are interchangeable.
		\end{enumerate}
	\end{corollary}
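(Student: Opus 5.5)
The corollary is a direct assembly of Lemmas~\ref{id=interchangeable=total}, \ref{Interchangeable_m utation} and \ref{12_m utation}, together with the observation that totally compatible products belong to every other class. Throughout, $\cdot$ is the unital commutative associative product of $A$ with unit $e$, and $\star$ is an associative product on $A$. The plan is to route every implication through condition (1), the $\id$-matching property.

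First, (1) implies (3) is exactly Lemma~\ref{id=interchangeable=total}, since $\cdot$ is commutative. Next, (3) implies (2) and (3) implies (4) hold by definition. Totally compatible products satisfy all four equalities $(a\cdot b)\star c=(a\star b)\cdot c=a\cdot(b\star c)=a\star(b\cdot c)$, so in particular the pairs defining $(12)$-matching and interchangeability hold. This gives (1) $\Rightarrow$ (3) $\Rightarrow$ (2) and (1) $\Rightarrow$ (3) $\Rightarrow$ (4).

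For the reverse implications, (2) implies (1) is Lemma~\ref{12_m utation}, and (4) implies (1) is Lemma~\ref{Interchangeable_m utation}. Both use only that $\cdot$ is commutative with unit $e$. In each case one shows that $\star$ is the mutation $\cdot_x$ with $x=e\star e$, and then invokes Proposition~\ref{id=mutation}. Finally, (3) implies (1) is immediate because total compatibility contains the $\id$-matching identities. These implications close all the cycles, so the four conditions are equivalent.

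The only step needing care is checking that the hypotheses of the cited lemmas are met. They are applied to the fixed unital commutative product $\cdot$, not to $\star$, and $\star$ is assumed associative as in the lemmas. The standing assumption in the statement, that $(A,\cdot,\star)$ is $\sigma$-matching and interchangeable, is not needed beyond supplying that $\star$ is an associative product on $A$. I would also briefly double-check the chain of equalities in Lemma~\ref{Interchangeable_m utation}, since it is the crux: each step uses only unitality, commutativity of $\cdot$, and the two interchange identities.
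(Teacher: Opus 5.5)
Your proposal is correct and takes the same route as the paper, whose proof simply cites the two mutation lemmas (interchangeable $\Rightarrow$ $\id$-matching, and $(12)$-matching $\Rightarrow$ $\id$-matching, for a unital commutative $\cdot$). You also state explicitly the step (1) $\Rightarrow$ (3), via the lemma that $\id$-matching with a commutative product forces total compatibility, which the paper's one-line proof leaves implicit; and you rightly note that the standing ``$\sigma$-matching and interchangeable'' hypothesis is never actually used.
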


	\begin{proof}
		This is a~consequence of Lemmas~\ref{Interchangeable_m utation} and~\ref{12_m utation}.
	\end{proof}

	\section{\texorpdfstring{\(id\)-matching, interchangeable, totally compatible structures on \(\mathbb{C}_n [x]\)}{id-matching, interchangeable, totally compatible structures on (\mathbb{C}\_n [x])}}\label{id-inter-total}

	Now, we consider the truncated polynomial algebra $\mathbb{C}_n [x].$
	Recall that it has the basis $\{x, x^2, \dots, x^{n}\}$. If we denote the basis element by $e_i =x^i $ for $1 \le i \le n$. Then multiplication in $\mathbb{C}_n [x]$ is given by
	\begin{equation}\label{mu_0}
		e_i \cdot e_j =
		\begin{cases}
			e_{i+j}, & \text{if } 2 \le i+j \le n, \\
			0, & \text{if } i+j \ge n.
		\end{cases}
	\end{equation}

	\begin{theorem}\label{3.1}
		%Let $\mathbb{C}_n [x]$ be a~truncated polynomial algebra. 
        A linear map $\varphi: \mathbb{C}_n [x] \to \mathbb{C}_n [x]$ is an automorphism if and only if it has the form
		\begin{equation}\label{Aut}
			\varphi(e_1 )=\sum_{i=1}^n A_i e_i,
			\qquad
			\varphi(e_i ) = \sum_{j=i}^n \left( \sum_{\substack{k_1 +\cdots+k_i =j}} A_{k_1} A_{k_2} \cdots A_{k_i} \right) e_j, \quad 2 \le i \le n,
		\end{equation}
		where $A_1 \ne 0.$
	\end{theorem}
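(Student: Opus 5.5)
The proof rests on the fact that $\mathbb{C}_n[x]$ (without unit, spanned by $e_1,\dots,e_n$) is generated as an algebra by $e_1$, since $e_i=e_1^{\,i}$ (the $i$-fold product of $e_1$ with itself) for $1\le i\le n$. Consequently, an algebra endomorphism is completely determined by the image of $e_1$, and we must have $\varphi(e_i)=\varphi(e_1)^i$.

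We first prove necessity. Let $\varphi$ be an automorphism and write $\varphi(e_1)=\sum_{k=1}^n A_k e_k$. There is no $e_0$ component because the algebra has no unit. Applying $\varphi$ to $e_i=e_1\cdot e_1\cdots e_1$ gives
\[
\varphi(e_i)=\Bigl(\sum_{k=1}^n A_k e_k\Bigr)^{i}=\sum_{k_1,\dots,k_i}A_{k_1}\cdots A_{k_i}\,e_{k_1}\cdots e_{k_i}.
\]
The product $e_{k_1}\cdots e_{k_i}$ equals $e_{k_1+\cdots+k_i}$ if this index is at most $n$, and $0$ otherwise. Grouping the terms by $j=k_1+\cdots+k_i$, which satisfies $j\ge i$ since each $k_r\ge 1$, yields exactly formula~\eqref{Aut}.

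We next show $A_1\neq0$. By the formula, $\varphi(e_i)=A_1^i e_i+(\text{terms in } e_{i+1},\dots,e_n)$. Hence the matrix of $\varphi$ in the basis $\{e_1,\dots,e_n\}$ is lower triangular with diagonal entries $A_1,A_1^2,\dots,A_1^n$, so $\det\varphi=A_1^{n(n+1)/2}$. Bijectivity of $\varphi$ then forces $A_1\ne0$. Alternatively, if $A_1=0$, the image of $\varphi$ lies in $\operatorname{span}\{e_2,\dots,e_n\}$, so $\varphi$ is not surjective.

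For sufficiency, take arbitrary $A_1,\dots,A_n$ with $A_1\ne0$ and set $y=\sum_k A_k e_k$. Define $\varphi(e_i)=y^i$, which by the computation above coincides with~\eqref{Aut}. Multiplicativity on basis elements requires $\varphi(e_i)\cdot\varphi(e_j)=y^{i+j}$ to equal $\varphi(e_i\cdot e_j)$. When $i+j\le n$ this is $\varphi(e_{i+j})=y^{i+j}$ by definition. When $i+j>n$ we need $y^{i+j}=0$, and this holds because $y$ lies in $\operatorname{span}\{e_1,\dots,e_n\}$, so every $m$-fold product of such elements lands in $\operatorname{span}\{e_m,\dots\}$, which is zero for $m>n$. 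Bilinearity then extends multiplicativity to all elements, and bijectivity follows from the triangular determinant computation above.

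The main obstacle is bookkeeping rather than anything conceptual. We must be careful that products with total index exceeding $n$ vanish, including the boundary case in~\eqref{mu_0}, which should read $i+j>n$. We must also check that grouping the expansion of $y^i$ by total degree gives the stated inner sum over ordered tuples $(k_1,\dots,k_i)$ with $k_r\ge1$. Since the terms come from expanding a product of sums over all ordered choices, ordered tuples are correct.
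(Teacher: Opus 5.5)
Your proof is correct and follows essentially the same route as the paper. Both use that $e_1$ generates the algebra, expand $\varphi(e_1)^i$ and group by total index, and get invertibility from the triangular matrix with diagonal $A_1, A_1^2, \dots, A_1^n$. Your version is slightly more complete: you check that $y^{i+j}=0$ when $i+j>n$, a case the paper's sufficiency argument skips, and you correctly flag the boundary typo in \eqref{mu_0}.
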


	\begin{proof}
		Assume that $\varphi: \mathbb{C}_n [x] \to \mathbb{C}_n [x]$ is an automorphism.

		First, observe that $\mathbb{C}_n [x]$ is generated by $e_1$. Indeed, by induction,
		$e_i = e_1^i, \quad 1 \le i \le n.$
		Since $\varphi$ is linear, we can write
		$\varphi(e_1 ) = \sum_{i=1}^n A_i e_i.$

		We claim that $A_1 \ne 0$. Suppose $A_1 = 0$. Then $\varphi(e_1 ) \in \langle e_2, \dots, e_n \rangle = (\mathbb{C}_n [x])^2$, which implies
		$\varphi(\mathbb{C}_n [x]) \subseteq (\mathbb{C}_n [x])^2.$
		Hence, $\varphi$ is not surjective, contradicting the assumption that $\varphi$ is an automorphism. Therefore, $A_1 \ne 0$.

		Since $\varphi$ is an algebra homomorphism, we have
		$\varphi(e_i ) = \varphi(e_1^i ) = \varphi(e_1 )^i.$
		Now compute
		\[
			\varphi(e_1 )^i = \left( \sum_{k=1}^n A_k e_k \right)^i.
		\]
		Using associativity and the multiplication rule, we obtain
		\[
			\varphi(e_1 )^i =
			\sum_{k_1,\dots,k_i}
			A_{k_1} A_{k_2} \cdots A_{k_i} \, e_{k_1 +\cdots+k_i}.
		\]
		Grouping terms by $j = k_1 + \cdots + k_i$, we get
		\[
			\varphi(e_i )
			=
			\sum_{j=i}^n
			\left(
			\sum_{\substack{k_1 +\cdots+k_i =j}}
			A_{k_1} \cdots A_{k_i}
			\right)
			e_j.
		\]
		On the other hand, assume that $\varphi$ is defined by
		\[
			\varphi(e_1 ) = \sum_{i=1}^n A_i e_i, \quad A_1 \ne 0, \qquad \text{and} \qquad
			\varphi(e_i ) = \sum_{j=i}^n \left( \sum_{k_1 +\cdots+k_i =j} A_{k_1} \cdots A_{k_i} \right) e_j.
		\]
		We show that $\varphi$ is an automorphism.

		First, we verify that $\varphi$ preserves multiplication. Since $e_i = e_1^i$, we have
		$\varphi(e_i ) = \varphi(e_1 )^i.$
		Then for any $i,j,$ where $2 \le i+j \le n:$
		\begin{center}
			$\varphi(e_i\cdot e_j )
			=
			\varphi(e_{i+j})
			=
			\varphi(e_1 )^{i+j}.$
		\end{center}
		On the other hand,
		\begin{center}
			$\varphi(e_i )\cdot \varphi(e_j )
			=
			\varphi(e_1 )^i \cdot \varphi(e_1 )^j
			=
			\varphi(e_1 )^{i+j}.$
		\end{center}
		Thus,
		$\varphi(e_i\cdot e_j ) = \varphi(e_i )\cdot \varphi(e_j ),$
		so $\varphi$ is an algebra homomorphism.

		Next, we prove that $\varphi$ is invertible. The matrix of $\varphi$ with respect to the basis $\{e_1, \dots, e_n\}$ is upper triangular with diagonal entries
		\[
			A_1, A_1^2, \dots, A_1^n.
		\]
		Since $A_1 \ne 0$, all diagonal entries are nonzero, hence the matrix is invertible. Therefore, $\varphi$ is bijective. Thus, $\varphi$ is an automorphism. The proof is complete.

		Note that, the theorem follows from the fact that $\mathbb{C}_n [x]$ is monogenic, i.e., generated by a~single element $e_1$. Hence, every automorphism is uniquely determined by the image of this generator.
	\end{proof}

	In the following theorem, we show that any interchangeable structure defined on the truncated polynomial algebra is $\id$-matching.

	\begin{theorem}\label{Interchangeable} Let $ \star$ be an interchangeable structure on the algebra $(\mathbb{C}_n [x],\cdot).$ Then the multiplication $\star$ is associative, and the products $\cdot$ and $\star$ are $\id$-matching.
	\end{theorem}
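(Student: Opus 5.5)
Unlike the lemmas in Section~\ref{com_a lgebra}, I cannot quote Lemma~\ref{Interchangeable_m utation} here, because $(\mathbb{C}_n [x],\cdot)$ has no unit. The plan is to redo that argument with the generator $e_1$ playing the role of the unit. First I pin down $\star$ completely in terms of the single element
\[
x:=e_1\star e_1=\sum_{k=1}^n \alpha_k e_k .
\]
After that, associativity and the $\id$-matching identities become direct index computations.

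\textbf{Step 1: the normal form of $\star$.} Only the two interchangeability identities~\eqref{interchangeable-ass-identity} are available, together with $e_i=e_1\cdot e_{i-1}=e_{i-1}\cdot e_1$. I will show by induction on $i+j$ that
\[
e_i\star e_j=\sum_{k=1}^{n}\alpha_k\, e_{i+j+k-2},
\]
with the convention $e_m=0$ for $m>n$. In other words, $e_i\star e_j$ is ``$x$ multiplied by $x^{i+j-2}$'', with $x^0$ read as the missing unit. The case $i=j=1$ is the definition of $x$. The other two cases are as follows.
\begin{itemize}
\item If $i\ge 2$, the first identity gives
\[
e_i\star e_j=(e_1\cdot e_{i-1})\star e_j=(e_1\star e_{i-1})\cdot e_j,
\]
and the induction hypothesis applies to $e_1\star e_{i-1}$, whose index sum $i$ is smaller.
\item If $i=1$ and $j\ge 2$, the second identity gives
\[
e_1\star e_j=e_1\star(e_{j-1}\cdot e_1)=e_1\cdot(e_{j-1}\star e_1),
\]
and again the induction hypothesis applies.
\end{itemize}
In both cases, multiplying the inductive expression by the appropriate $e_m$ simply shifts every index by $m$. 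Truncation beyond $n$ is harmless, because all indices only increase under $\cdot$.

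\textbf{Step 2: associativity and the $\id$-matching identities.} By trilinearity it suffices to check everything on basis triples $e_i,e_j,e_l$. Using Step~1 twice:
\begin{itemize}
\item $(e_i\star e_j)\star e_l$ and $e_i\star(e_j\star e_l)$ both equal $\sum_{k,k'}\alpha_k\alpha_{k'}\,e_{i+j+l+k+k'-4}$.
\item $(e_i\cdot e_j)\star e_l$, $e_i\cdot(e_j\star e_l)$, $(e_i\star e_j)\cdot e_l$ and $e_i\star(e_j\cdot e_l)$ all equal $\sum_k\alpha_k\,e_{i+j+l+k-2}$.
\end{itemize}
When an intermediate product such as $e_i\cdot e_j$ vanishes because $i+j>n$, the corresponding final index also exceeds $n$, so both sides vanish. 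This shows at once that $\star$ is associative and that $\cdot,\star$ are $\id$-matching. In fact they are even totally compatible, consistent with Lemma~\ref{id=interchangeable=total}.

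\textbf{Main obstacle.} The delicate point is Step~1. One must ensure that the two reduction routes, peeling $e_1$ off the left factor or off the right factor, never give conflicting values. One must also treat the boundary cases $i=1$ or $j=1$ and the truncation $i+j+k-2>n$ uniformly. To avoid any apparent inconsistency, I would first prove the displayed formula separately for $e_1\star e_j$ and for $e_j\star e_1$, by a one-variable induction using one identity each. Only then would I derive the general $e_i\star e_j$ from these. Since $x$ itself is arbitrary, no constraint on the $\alpha_k$ should arise. This matches the expectation that such structures are exactly the formal ``mutations by $x/e_1^2$''.
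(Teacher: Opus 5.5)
Your proposal is correct and follows essentially the same route as the paper. Both fix $e_1\star e_1=\sum_k\alpha_k e_k$, use the two interchangeability identities to peel off $e_1$ and obtain $e_i\star e_j=\sum_k\alpha_k e_{i+j+k-2}$ (the paper's formula~\eqref{int6}), and then verify associativity and the $\id$-matching identities by index bookkeeping. The differences are cosmetic: the paper first treats $e_i\star e_1$ and $e_1\star e_i$ and then uses $e_i\star e_j=(e_{i-1}\star e_1)\cdot e_j$, where you run a single strong induction on $i+j$. Also, the ``consistency of routes'' worry you raise is moot, since each route is a valid deduction from the given $\star$, so one route per pair $(i,j)$ suffices.
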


	\begin{proof}
		We first determine the multiplication $\star$ on the algebra $(\mathbb{C}_n [x],\cdot).$
		Set
		\begin{center}
			$e_1\star e_1 =\sum_{t=1}^{n}\alpha_{t}e_t.$
		\end{center}
		Since $\cdot$ and $\star$ are interchangeable, using the multiplication~\eqref{mu_0}, we obtain
		\begin{center}
			$e_i\star e_1 =(e_{i-1} \cdot e_1 )\star e_1 = (e_{i-1} \star e_1 )\cdot e_1;$\\
			$e_1\star e_i = e_1\star (e_1\cdot e_{i-1})=e_1\cdot (e_1\star e_{i-1}).$
		\end{center}
		Hence, by induction,
		\begin{center}
			$e_{i}\star e_1 =e_1\star e_{i}=\sum_{t=i}^{n}\alpha_{t-i+1}e_t, \quad 2\leq i \leq n-1.$
		\end{center}
		Next, using
		$e_i\star e_j =(e_{i-1}\cdot e_1 )\star e_j =(e_{i-1}\star e_1 )\cdot e_j,$ we obtain
		\begin{equation}\label{int6}
			e_i \star e_j =\sum_{t=i+j-1}^{n}\alpha_{t-i-j+2}e_t, \quad 2\leq i+j \leq n+1.
		\end{equation}
		We now prove that $\star$ is associative.
		\begin{align*}
			(e_i \star e_j ) \star e_k &= \sum_{t=i+j-1}^{n}\alpha_{t-i-j+2}e_t \star e_k =\sum_{t=i+j-1}^{n}\alpha_{t-i-j+2} \big( \sum_{s=t+k-1}^{n}\alpha_{s-t-k+2} e_s \big)\\
			&=\sum_{s=i+j+k-2}^{n} \left(\sum_{r=1}^{s-i-j-k+3}\alpha_{r} \alpha_{s-r-i-j-k+4}\right)e_s;\\
			e_i \star (e_j \star e_k ) &= \sum_{m=k+j-1}^{n}\alpha_{m-k-j+2}e_i \star e_m =\sum_{m=k+j-1}^{n}\alpha_{m-k-j+2} \big( \sum_{s=m+i-1}^{n}\alpha_{s-m-i+2} e_s \big)\\
			&=\sum_{s=i+j+k-2}^{n} \left(\sum_{l=1}^{s-i-j-k+3}\alpha_{l} \alpha_{s-l-i-j-k+4}\right)e_s.
		\end{align*}
		Hence, $(e_i \star e_j ) \star e_k = e_i \star (e_j \star e_k )$. Finally, using the explicit formula~\eqref{int6} for $e_i\star e_j,$ we compute
		\begin{center}
			$(e_i\star e_j )\cdot e_k = \sum_{t=i+j-1}^{n}\alpha_{t-i-j+2}(e_t\cdot e_k )=\sum_{t=i+j+k-1}^{n}\alpha_{t-i-j-k+2}e_t =e_i\star e_{j+k}=e_i\star (e_j\cdot e_k ).$
		\end{center}
		Thus, the products $\cdot$ and $\star$ are $\id$-matching.
	\end{proof}

	\begin{corollary}
		Let $\star$ be a~bilinear multiplication on $(\mathbb{C}_n [x],\cdot)$. Then the following statements are equivalent:
		\begin{enumerate}
			\item $\cdot$ and $\star$ are $\id$-matching;
			\item $\cdot$ and $\star$ are totally compatible;
			\item $\cdot$ and $\star$ are interchangeable.
		\end{enumerate}
	\end{corollary}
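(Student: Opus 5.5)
The plan is to prove the cycle of implications $(1)\Rightarrow(2)\Rightarrow(3)\Rightarrow(1)$. The two facts that do the work are that $(\mathbb{C}_n[x],\cdot)$ is commutative and associative, and Theorem~\ref{Interchangeable}.

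\emph{Step $(1)\Rightarrow(2)$.} In the setting of the paper, an $\id$-matching structure comes with $\star$ associative; this is part of the definition, which is phrased for associative products. Since $\cdot$ is commutative, Lemma~\ref{id=interchangeable=total} applies directly and shows that $\cdot$ and $\star$ are totally compatible.

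\emph{Step $(2)\Rightarrow(3)$.} This is immediate from the definitions. Total compatibility gives $(a\cdot b)\star c=(a\star b)\cdot c$ and $a\cdot(b\star c)=a\star(b\cdot c)$ as special cases of the chain of equalities, and these two identities are exactly \eqref{interchangeable-ass-identity}.

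\emph{Step $(3)\Rightarrow(1)$.} Suppose $\star$ is an arbitrary bilinear product interchangeable with $\cdot$. Theorem~\ref{Interchangeable} says precisely that $\star$ is then automatically associative and that $\cdot$ and $\star$ are $\id$-matching. This closes the cycle.

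The only point needing care is bookkeeping about associativity. The corollary is stated for a merely bilinear $\star$, whereas the notions $\id$-matching and totally compatible were defined for associative products. I would therefore read (1) and (2) as including associativity of $\star$. Under that reading, the implication from (3) gains associativity via Theorem~\ref{Interchangeable}, and (1) and (2) carry it by hypothesis. Beyond this, no computation is required.

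It is also worth checking that Theorem~\ref{Interchangeable} really covers every interchangeable $\star$, not just one given by a particular formula. It does: its proof derives formula \eqref{int6} from interchangeability alone, starting with an arbitrary $e_1\star e_1$. So no obstacle remains.
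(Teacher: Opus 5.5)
Your proof is correct and takes the same route as the paper, which also derives the corollary from Lemma~\ref{id=interchangeable=total} for $(1)\Rightarrow(2)$ (using commutativity of $\cdot$), Theorem~\ref{Interchangeable} for $(3)\Rightarrow(1)$, and the definitions for $(2)\Rightarrow(3)$. Your caveat about associativity is harmless but not needed, because the computation in Lemma~\ref{id=interchangeable=total} never uses associativity of $\star$.
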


	\begin{proof}
		This is a~consequence of Lemma~\ref{id=interchangeable=total} and Theorem~\ref{Interchangeable}.
	\end{proof}

	Therefore, we can conclude that if $\cdot$ and $\star$ are $\id$-matching and algebra with the product $\cdot$ is truncated polynomial algebra, then the product $\star$ is determined by~\eqref{int6}. We denote by $Id(\alpha_1,\dots,\alpha_n )$ the class of algebras with such multiplications, that is,
	\begin{equation}\label{int7}
		Id(\alpha_1,\dots,\alpha_n ):
		\left\{
		\begin{array}{ll}
			e_i \cdot e_j =e_{i+j}, & 2\leq i+j \leq n, \\[1mm]
			e_i \star e_j =\sum_{t=i+j-1}^{n}\alpha_{t-i-j+2}e_t, & 2\leq i+j \leq n+1.
		\end{array}
		\right.
	\end{equation}
	In the following lemma, we provide a~criterion for determining when two algebras from the class~\eqref{int7} are isomorphic.

	\begin{lemma}\label{Lemma10}
		Two algebras $\mathfrak{L}^{\alpha_1,\dots,\alpha_n}$ and $\mathfrak{L}'^{\alpha_1 ',\dots,\alpha_n '}$ from the class~\eqref{int7} are isomorphic if and only if, there exist $A_1 \in \mathbb{C}^*,$ $A_2, \dots, A_n \in \mathbb{C},$ such that the following relation holds:
		\begin{equation}\label{inteq1}
			\sum_{i=1}^t\sum_{k_1 +\cdots+k_i =t} A_{k_1} \dots A_{k_i}\alpha_i ' = \sum_{i=1}^{t} \sum_{j=1}^{t-i+1} A_i A_j \alpha_{t-i-j+2}, \ \ 1\leq t\leq n.
		\end{equation}
	\end{lemma}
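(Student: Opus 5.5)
An isomorphism between two algebras of the class \eqref{int7} is a linear bijection $\varphi$ that is simultaneously an isomorphism for $\cdot$ and for $\star$. Since the $\cdot$-structure is the same truncated polynomial algebra on both sides, $\varphi$ must be an automorphism of $(\mathbb{C}_n[x],\cdot)$. By Theorem~\ref{3.1}, it therefore has the form \eqref{Aut} with $A_1\neq 0$. Conversely, every such $\varphi$ preserves $\cdot$. So the whole question reduces to deciding when a map of the form \eqref{Aut} also carries $\star$ (with parameters $\alpha$) to $\star$ (with parameters $\alpha'$), i.e.
\[
\varphi(e_i\star e_j)=\varphi(e_i)\star'\varphi(e_j)\quad\text{for all } i,j .
\]

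The first step reduces this to the single generator pair. By \eqref{int6}, and because the products are $\id$-matching (Theorem~\ref{Interchangeable}), we have $e_i\star e_j=e_{i-1}\cdot(e_1\star e_1)\cdot e_{j-1}$ for $i,j\ge 2$, and similarly with obvious modifications when $i=1$ or $j=1$. The same holds for $\star'$. The map $\varphi$ respects $\cdot$, and the relations $(a\cdot b)\star' c=a\cdot(b\star' c)$ and $(a\star' b)\cdot c=a\star'(b\cdot c)$ hold in the target. Applying $\varphi$ thus gives
\[
\varphi(e_i\star e_j)=\varphi(e_{i-1})\cdot\varphi(e_1\star e_1)\cdot\varphi(e_{j-1})
\]
and
\[
\varphi(e_i)\star'\varphi(e_j)=\varphi(e_{i-1})\cdot\bigl(\varphi(e_1)\star'\varphi(e_1)\bigr)\cdot\varphi(e_{j-1}).
\]
Hence it suffices to impose the single condition
\[
\varphi(e_1\star e_1)=\varphi(e_1)\star'\varphi(e_1).
\]
The converse direction of the lemma follows from the same computation.

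The second step computes both sides of this condition in the basis. On the left, $\varphi(e_1\star e_1)=\sum_i\alpha_i\varphi(e_i)$. Inserting \eqref{Aut} and collecting the coefficient of $e_t$ gives
\[
\sum_{i=1}^t\sum_{k_1+\cdots+k_i=t}A_{k_1}\cdots A_{k_i}\,\alpha_i .
\]
On the right, $\varphi(e_1)\star'\varphi(e_1)=\sum_{i,j}A_iA_j\,e_i\star' e_j$. Using \eqref{int6}, the coefficient of $e_t$ is
\[
\sum_{i+j\le t+1}A_iA_j\,\alpha'_{t-i-j+2}.
\]
Equating coefficients for $1\le t\le n$ yields a relation of exactly the shape of \eqref{inteq1}, with the bookkeeping of which side carries $\alpha$ and which carries $\alpha'$ fixed by the direction of $\varphi$. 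If I have the direction backwards, I would replace $\varphi$ by $\varphi^{-1}$, which is again of the form \eqref{Aut}, and swap the roles of the primed and unprimed parameters.

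The main obstacle I expect is the index bookkeeping. The sum in \eqref{int6} ranges only over $t\le n$, and the products vanish outside $2\le i+j\le n+1$, so I need to verify that the range $j\le t-i+1$ and the truncations match. I also need to justify the reduction step carefully in the boundary cases $i=1$ or $j=1$, where one of the factors $e_{i-1}$ or $e_{j-1}$ is absent.
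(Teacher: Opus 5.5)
Your proposal is correct and follows essentially the paper's route: you restrict to automorphisms of the form \eqref{Aut} and compare the coefficients of $e_t$ in the generator equation. Your reduction $e_i\star e_j=e_{i-1}\cdot(e_1\star e_1)\cdot e_{j-1}$ via the $\id$-matching identities also supplies the justification the paper only asserts when it says the remaining equations ``reduce to $0=0$''. The direction issue disappears if you take $\varphi:\mathfrak{L}'\to\mathfrak{L}$ and impose $\varphi(e_1\star' e_1)=\varphi(e_1)\star\varphi(e_1)$, as the paper does; this yields \eqref{inteq1} verbatim, with no need for $\varphi^{-1}$.
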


	\begin{proof}
		$\mathfrak{L}' \cong \mathfrak{L}$ if and only if, there exist an automorphism $\varphi$ of the algebra $(\mathbb{C}_n [x],\cdot),$ such that $\varphi(e_i \star' e_j ) = \varphi(e_i )\star \varphi(e_j ).$
		Since $\varphi$ is an automorphism of the algebra $(\mathbb{C}_n [x],\cdot),$ it follows from Theorem~\ref{3.1} that $\varphi$ has the form given in~\eqref{Aut}.

		First, we compute
		\begin{align*}
			\varphi(e_1 \star' e_1 ) 
            &= \varphi\Big(\sum_{i=1}^{n}\alpha_{i}'e_i\Big)
            =\sum_{i=1}^{n} \alpha_i '\sum_{t=i}^{n}\sum_{k_1 +\dots +k_i =t} A_{k_1}\dots A_{k_i}e_t \\
			&= \sum_{t=1}^{n} \sum_{i=1}^{t} \sum_{k_1 +\dots +k_i =t}\alpha_i ' A_{k_1} \dots A_{k_i}e_t.
		\end{align*}
		On the other hand,
        \begin{align*}
            \varphi(e_1) \star \varphi(e_1 )
            &= \sum_{i=1}^n A_i e_i \star \sum_{j=1}^n A_j e_j \\
            &= \sum_{i=1}^n \sum_{j=1}^n A_i A_j \left( \sum_{t=i+j-1}^{n} \alpha_{t-i-j+2} e_t \right) \\
            &= \sum_{i=1}^n \sum_{j=1}^n \sum_{t=i+j-1}^{n} A_i A_j \alpha_{t-i-j+2} e_t \\
            &= \sum_{t=1}^{n}\sum_{i=1}^{t} \sum_{j=1}^{t-i+1} A_i A_j \alpha_{t-i-j+2} e_t.
        \end{align*}
		By comparing the coefficients of the basis elements $e_t$, we obtain the system of relations~\eqref{inteq1}.

		On the other hand, the action of $\varphi$ on the higher-order basis elements $e_i$ ($i \geq 2$) is completely predetermined by Theorem~\ref{3.1}, and the parameters $A_i$ are already fixed by~\eqref{inteq1}. Since the multiplication tables for $\star$ and $\star'$ in~\eqref{int7} are structurally dependent on the image of the generator $e_1$, the equations arising from $\varphi(e_i \star' e_j ) = \varphi(e_i ) \star \varphi(e_j )$ for $i+j > 2$ identically reduce to the form $0=0$. Consequently, the map $\varphi$ preserves the second product for all basis elements, meaning $\mathfrak{L}' \cong \mathfrak{L}$. This completes the proof.
	\end{proof}

	Let $\alpha_s$ be the first non-vanishing parameter in the equality~\eqref{inteq1}, that is, $\alpha_s\neq 0$ and $\alpha_{i}=0$ for $1 \leq i \leq s-1.$ Then it follows immediately that
	\begin{equation}\label{alpha_s}
		\alpha_{s}'=\alpha_{s}A_1^{2-s}, \qquad \alpha_{i}'=0, \quad 1 \leq i \leq s-1.
	\end{equation}

	\begin{lemma}\label{intlem1} Let $\mathfrak{L}^{\alpha_1,\dots,\alpha_n}$ be an algebra from the class $Id(\alpha_1,\dots,\alpha_n )$. If $\alpha_1\neq0,$ then it is isomorphic to the algebra
		\[
			\mathbb{B}_1:
			\left\{
			\begin{array}{ll}
				e_i \cdot e_j =e_{i+j}, & 2\leq i+j \leq n, \\[1mm]
				e_i \star e_j =e_{i+j-1}, & 2\leq i+j \leq n+1.
			\end{array}
			\right.
		\]
	\end{lemma}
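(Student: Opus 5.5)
The plan is to apply Lemma~\ref{Lemma10} with the primed algebra taken to be $\mathbb{B}_1$. Comparing with \eqref{int7}, $\mathbb{B}_1$ is the member of the class $Id$ with parameters $\alpha_1'=1$ and $\alpha_2'=\dots=\alpha_n'=0$. So it suffices to exhibit $A_1\in\mathbb{C}^*$ and $A_2,\dots,A_n\in\mathbb{C}$ satisfying the system \eqref{inteq1} for these primed parameters and the given $\alpha_1,\dots,\alpha_n$ with $\alpha_1\neq 0$.

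First I simplify the left-hand side of \eqref{inteq1}. Since only $\alpha_1'$ is nonzero, only the $i=1$ summand survives. The inner sum then has the single term $k_1=t$, so the left side is just $A_t$. The system becomes
\[
A_t=\sum_{i=1}^{t}\sum_{j=1}^{t-i+1}A_iA_j\,\alpha_{t-i-j+2},\qquad 1\le t\le n .
\]
For $t=1$ this reads $A_1=A_1^2\alpha_1$. Since $\alpha_1\ne0$, we take $A_1=\alpha_1^{-1}\in\mathbb{C}^*$; this is consistent with \eqref{alpha_s} for $s=1$.

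Next I solve recursively in $t$. For $t\ge2$, I isolate in the right-hand side the terms containing $A_t$. The index constraint $i+j\le t+1$ with $i,j\ge1$ forces $A_t$ to appear only for $(i,j)=(t,1)$ and $(i,j)=(1,t)$. These are distinct since $t\ge2$, and each carries the coefficient $\alpha_1$. All remaining terms involve only $A_1,\dots,A_{t-1}$. The $t$-th equation is therefore
\[
A_t=2A_1\alpha_1A_t+F_t(A_1,\dots,A_{t-1})=2A_t+F_t ,
\]
where $F_t$ is a polynomial in the earlier unknowns and the $\alpha$'s. Hence $A_t=-F_t(A_1,\dots,A_{t-1})$, which determines $A_2,\dots,A_n$ successively. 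With $A_1\ne0$ and these values, \eqref{inteq1} holds for all $t$, so Lemma~\ref{Lemma10} gives $\mathfrak{L}^{\alpha_1,\dots,\alpha_n}\cong\mathbb{B}_1$.

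The only delicate step is the bookkeeping in the $t$-th equation: confirming that $A_t$ enters linearly with total coefficient $2A_1\alpha_1=2$. This makes the coefficient $1-2=-1$ of $A_t$ nonzero, so the triangular system is solvable. If I wanted to avoid relying on the direction convention in Lemma~\ref{Lemma10}, I would instead directly check that the automorphism $\varphi$ of \eqref{Aut} built from these $A_i$ satisfies $\varphi(e_1\star_{\mathbb{B}_1}e_1)=\varphi(e_1)\star\varphi(e_1)$. The remaining products follow as in the proof of Lemma~\ref{Lemma10}.
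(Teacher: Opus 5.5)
Your proof is correct, but it is organized differently from the paper's. The paper normalizes step by step. It first uses $A_1$ to get $\alpha_1=1$. Then, assuming $\alpha_1=1$ and $\alpha_2=\dots=\alpha_{k-1}=0$, it takes $A_1=1$ and $A_2=\dots=A_{k-1}=0$, computes $\alpha_k'=\alpha_k+A_k$, and sets $A_k=-\alpha_k$. The isomorphism with $\mathbb{B}_1$ is thus implicitly a composition of several elementary ones, and each step needs an explicit formula for the transformed parameter.

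You instead fix $\mathbb{B}_1$ as the target from the start and solve \eqref{inteq1} once, for a single automorphism. You use only the structural fact that $A_t$ enters the $t$-th equation linearly with net coefficient $1-2A_1\alpha_1=-1$, and your index check for this is right. This route gives two things:
\begin{itemize}
\item You never need closed formulas for transformed parameters. In the paper, the base-step choice $A_2=-\alpha_2/\alpha_1$ actually yields $\alpha_2'=\alpha_2(1-\alpha_1)$ and should read $A_2=-\alpha_2/\alpha_1^2$. The slip is harmless only because the inductive step, run with $k=2$, repairs it.
\item It shows that the isomorphism $\mathbb{B}_1\to\mathfrak{L}^{\alpha_1,\dots,\alpha_n}$ is unique.
\end{itemize}
Indeed, set $g=\sum_i A_ix^{i-1}$ and $p=\sum_m\alpha_m x^{m-1}$. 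For $A_1\neq0$ your system is equivalent to $g\,p\equiv 1\pmod{x^n}$, i.e.\ $\varphi(e_1)=x\,p(x)^{-1}$ truncated. This also makes clear why $\alpha_1=p(0)\neq0$ is exactly the needed hypothesis.

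In return, the paper's stepwise scheme makes each choice of $A_k$ completely explicit. It is also the template the paper reuses in Lemma~\ref{intlem2} and Section~\ref{12-matching-section}, where only some parameters can be normalized away.

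Both arguments rest on the sufficiency direction of Lemma~\ref{Lemma10}, and that direction is sound. In every algebra of the class, $e_i\star e_j=e_1^{\,i-1}\cdot(e_1\star e_1)\cdot e_1^{\,j-1}$ (factors $e_1^{0}$ omitted). Since $\star$ is $\id$-matching with $\cdot$ and $\varphi$ respects $\cdot$, matching $\varphi(e_1\star' e_1)$ with $\varphi(e_1)\star\varphi(e_1)$ already intertwines all the products.
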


	\begin{proof}
		Since $\alpha_1\neq0,$ by choosing $A_1 = \frac{1}{\alpha_1,}$ $A_2 = -\frac{\alpha_2}{\alpha_1}$ in~\eqref{inteq1}, we obtain $\alpha_1 '=1$ and $\alpha_2 '=0.$ Hence, without loss of generality, we may assume that $\alpha_1 =1,$ $\alpha_2 =0.$

		We now prove by induction that one may further assume $\alpha_i = 0$ for all $2 \le i \le n$.

		The base case $i=2$ has already been established. Consider $i=3$. Assume $\alpha_1 = 1$ and $\alpha_2 = 0$. Choosing $A_1 = 1$ and $A_2 = 0$ in~\eqref{inteq1}, we obtain
		\begin{center}
			$\alpha_1 '=1, \quad \alpha_2 '=0, \quad \alpha_3 '=\alpha_3 + A_3.$
		\end{center}
		By taking $A_3 =-\alpha_3,$ we obtain $\alpha_3 '=0.$

		Assume now that for some $k \ge 3,$ we have $\alpha_1 =1,$ $\alpha_2 = \dots= \alpha_{k-1}=0,$ then choosing $A_1 = 1,$ $A_2 =\dots=A_{k-1}=0$
		in~\eqref{inteq1}, we have that
		\begin{center}
			$\alpha_1 '=1, \quad \alpha_i '=0, \ 2 \leq i \leq k-1, \quad \alpha_k '=\alpha_k + A_k.$
		\end{center}
		Choosing $A_k =-\alpha_k,$ we obtain $\alpha_k '=0.$
		Thus, by induction, any algebra from the class $Id(\alpha_1,\dots,\alpha_n )$ with $\alpha_1 \neq 0$ is isomorphic to an algebra with parameters $\alpha_1 ' = 1$ and $\alpha_i ' = 0$ for all $2 \le i \le n$, that is, to $\mathbb{B}_1$.
	\end{proof}

	Now, we consider the case when $\alpha_1 =0$ and $\alpha_s \neq 0$ for some $ 3 \le s \le n.$

	\begin{lemma}\label{intlem2} Let $\mathfrak{L}^{\alpha_1,\dots,\alpha_n}$ be an algebra from the class $Id(\alpha_1,\dots,\alpha_n )$. If $\alpha_1 =0$ and $\alpha_s \neq 0$ for some $3 \le s \le n$ then $\mathfrak{L}^{\alpha_1,\dots,\alpha_n}$ is isomorphic to the algebra
		\[
			\mathbb{B}_s (\alpha):\left\{
			\begin{array}{ll}
				e_i \cdot e_j =e_{i+j}, & 2\leq i+j \leq n, \\[1mm]
				e_i \star e_j = \alpha e_{i+j} + e_{s+i+j-2}, & 2\leq i+j \leq n-s+2, \\[1mm]
				e_i \star e_j = \alpha e_{i+j}, & n-s+3\leq i+j \leq n.\\[1mm]
			\end{array}
			\right.
		\]
	\end{lemma}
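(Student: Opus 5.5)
The plan is to rewrite the class \eqref{int7} as mutations of the truncated polynomial algebra. Then the isomorphism condition \eqref{inteq1} becomes a functional equation for a single polynomial, which can be solved with an $m$-th root.

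\textbf{Step 1 (reading the hypothesis and the target).} In \eqref{int7}, $e_i\star e_j$ has coefficient $\alpha_1$ on $e_{i+j-1}$, $\alpha_2$ on $e_{i+j}$, and in general $\alpha_k$ on $e_{i+j+k-2}$. So $\mathbb{B}_s(\alpha)$ is the member of the class with $\alpha_2=\alpha$, $\alpha_s=1$ and all other $\alpha_k=0$. The terms $e_{s+i+j-2}$ with $s+i+j-2>n$ vanish automatically, which explains the two ranges of $i+j$ in the statement. I read the hypothesis as: $\alpha_1=0$ and $s\ge 3$ is the first index $\ge 3$ with $\alpha_s\neq 0$; the value of $\alpha_2$ is arbitrary. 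Accordingly I aim to show $\alpha_2'=\alpha_2$, $\alpha_s'=1$, and $\alpha_k'=0$ for all other $k$.

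\textbf{Step 2 (mutation picture).} Since $\alpha_1=0$, set
\[
w(x)=\alpha_2+\alpha_3x+\dots+\alpha_nx^{n-2}\in\mathbb{C}[x]/(x^{n+1}),
\]
where the quotient is now taken with unit. In this notation $e_i\star e_j=e_i\cdot e_j\cdot w$ in the unital algebra. Let $f(x)=\sum_k A_kx^k$. By Theorem~\ref{3.1}, the automorphism $\varphi$ is the substitution $x\mapsto f(x)$. The condition $\varphi(e_1\star' e_1)=\varphi(e_1)\star\varphi(e_1)$ used in Lemma~\ref{Lemma10} then reads $f^2\cdot w'(f)=f^2\cdot w$. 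Because $f^2=A_1^2x^2(1+\dots)$ with $A_1\neq0$, this is equivalent to
\[
w'(f(x))\equiv w(x)\pmod{x^{n-1}} .
\]
I would verify that expanding this congruence coefficientwise reproduces exactly \eqref{inteq1}, so that it is a faithful restatement of the isomorphism criterion.

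\textbf{Step 3 (normalizing).} Write $m=s-2\ge1$. The hypothesis gives $w=\alpha_2+x^m u(x)$ with $u(0)=\alpha_s\neq0$. For the target $w'=\alpha_2+x^m$ the equation becomes $f^m=x^m u(x)$. Since $u$ is a unit in $\mathbb{C}[[x]]$ and we work over $\mathbb{C}$, it has an $m$-th root $v=u^{1/m}=\alpha_s^{1/m}+\dots$, obtained from the binomial series. Take $f=x\,v(x)$ truncated at degree $n$. Then $A_1=\alpha_s^{1/m}\neq0$, which is consistent with \eqref{alpha_s}, since $\alpha_s'=\alpha_sA_1^{2-s}=1$. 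By Theorem~\ref{3.1} this $f$ defines an automorphism, and by Lemma~\ref{Lemma10} it gives the desired isomorphism with $\mathbb{B}_s(\alpha)$, where $\alpha=\alpha_2$.

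\textbf{Main obstacle.} The delicate point is Step 2: checking that \eqref{inteq1} coincides with the congruence $w'\circ f\equiv w$ modulo $x^{n-1}$, with the truncation degrees handled correctly. An alternative is to avoid the generating function altogether and run an induction in the style of Lemma~\ref{intlem1}. First choose $A_1$ with $A_1^{m}=\alpha_s$. Then kill $\alpha_{s+1},\dots,\alpha_n$ successively: by \eqref{inteq1}, $\alpha_{s+k}'$ equals a nonzero multiple of $A_{k+1}$ plus terms depending only on $A_1,\dots,A_k$, so each $A_{k+1}$ can be solved for in turn.
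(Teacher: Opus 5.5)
Your proposal is correct, and it takes a genuinely different route from the paper.

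\textbf{The paper's route.} The paper stays inside the coordinates of \eqref{inteq1}. It reads off $A_1^{s-2}\alpha_s'=\alpha_s$ and $A_1^{s}\alpha_{s+1}'=A_1\alpha_{s+1}-(s-2)A_2\alpha_s$. It then chooses $A_1=\sqrt[s-2]{\alpha_s}$ and a suitable $A_2$ to get $\alpha_s'=1$ and $\alpha_{s+1}'=0$. Finally it kills $\alpha_{s+k+1}$ one coefficient at a time, using automorphisms with $A_1=1$, $A_2=\dots=A_{k+1}=0$ and a suitable $A_{k+2}$. This is essentially your fallback argument.

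\textbf{Your route.} You use $\alpha_1=0$ to write $\star$ as the mutation $a\star b=a\cdot b\cdot w$ in the unitalization. This turns the isomorphism condition into the single congruence $w'(f)\equiv w \pmod{x^{n-1}}$. You then solve it in one stroke with $f=x\,u^{1/(s-2)}$.

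The step you flag as the main obstacle is immediate. We have $\varphi(e_1\star' e_1)=\sum_k\alpha_k' f^k=f^2w'(f)$ and $\varphi(e_1)\star\varphi(e_1)=f^2w$, and \eqref{inteq1} is literally the coefficient comparison of these two. In fact you do not need Lemma~\ref{Lemma10} at all. Directly,
$\varphi(e_i\star' e_j)=f^{i+j-2}\cdot f^2w'(f)=f^{i+j-2}\cdot f^2 w=\varphi(e_i)\star\varphi(e_j)$.
This also makes rigorous the paper's remark that the equations for $i+j>2$ reduce to $0=0$.

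\textbf{What each approach buys.} Your route gives one explicit isomorphism and avoids induction bookkeeping. That bookkeeping is exactly where the paper slips. In its inductive step the relation should read $\alpha_{s+k+1}'=\alpha_{s+k+1}-(s-2)A_{k+2}$, not $\alpha_{s+k+1}-A_{k+2}$; the factor $s-2$ does appear in its base step. The slip is harmless since $s\geq 3$, and your fallback correctly claims only a nonzero multiple of $A_{k+1}$.

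The paper's coordinate method buys portability. In Section~\ref{12-matching-section} the extra terms $\beta_{i+j-1}e_n$ mean $\star$ is no longer a mutation. The same coefficient-by-coefficient normalization is reused there, where a single functional equation like yours is no longer available.
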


	\begin{proof}
		Let $\alpha_s$ be the first non-vanishing parameter, that is, $\alpha_{i}=0$ for $3 \leq i \leq s-1$ and $\alpha_s\neq 0$. Then, from~\eqref{inteq1}, it follows that
		\begin{gather*}
			\alpha_{1}'=0, \quad
            \alpha_2 '=\alpha_2, \quad
            \alpha_{i}'=0, \ 3 \leq i \leq s-1, \\
            A_1^{s-2}\alpha_{s}'=\alpha_{s}, \quad A_1^{s}\alpha_{s+1}'=A_1\alpha_{s+1} - (s-2) A_2\alpha_s.
		\end{gather*}
		We now prove by induction that one may further assume $\alpha_i =0$ for all $s+1 \leq i \leq n.$

		First, by choosing $A_1 = \sqrt[s-2]{\alpha_s},$ $A_2 = \frac{A_1\alpha_{s+1}}{(s-2)\alpha_s}$, we obtain $\alpha_s '=1$ and $\alpha_{s+1}'=0.$ Hence, without loss of generality, we may assume that $\alpha_s =1,$ $\alpha_{s+1}=0.$ This establishes the base step of the induction.

		Assume now that $\alpha_{s}=1$ and for some $k \ge 1,$ we have $\alpha_{s+1}= \dots= \alpha_{s+k}=0.$ Choosing $A_1 = 1,$ $A_2 =\dots=A_{k+1}=0$
		in~\eqref{inteq1}, we have that
		\begin{center}
			$\alpha_s '=1, \quad \alpha_i '=0, \ s+1 \leq i \leq s+k, \quad \alpha_{s+k+1}'=\alpha_{s+k+1} - A_{k+2}.$
		\end{center}
		Taking $A_{k+2}=\alpha_{s+k+1},$ we obtain $\alpha_{s+k+1}'=0.$

		Thus, by induction, any algebra from the class $Id(\alpha_1,\dots,\alpha_n )$ with $\alpha_1 =0$ and $\alpha_s \neq 0$ for some $ 3 \le s \le n$ is isomorphic to an algebra with parameters
		\begin{center}
			$\alpha_{1}'=0,\quad \alpha_2 ',\quad \alpha_s '=1, \quad \alpha_i ' = 0 \quad \text{for all}\quad i \quad\text{such that} \quad i\neq s\quad \text{and}\quad 3 \le i \le n.$
		\end{center}
		Thus, $\mathfrak{L}^{\alpha_1,\dots,\alpha_n}$ is isomorphic to $\mathbb{B}_s (\alpha)$ for some $s.$
	\end{proof}

	Note that if $\alpha_1 =0$ and $\alpha_s = 0$ for all $3 \le s \le n,$ then $\alpha_2 '=\alpha_2$ and we obtain the following one-parameter family of algebras:
	\[
		\mathbb{B}_2 (\alpha):
		\left\{
		\begin{array}{ll}
			e_i \cdot e_j =e_{i+j}, & 2\leq i+j \leq n, \\[1mm]
			e_i \star e_j = \alpha e_{i+j}, & 2\leq i+j \leq n,
		\end{array}
		\right.
	\]
	where we can define $\star$ multiplication with $\cdot$ in the following form: $e_i \star e_j = \alpha (e_i \cdot e_j ).$

	\begin{theorem}\label{int} Let $ \star$ be an $\id$-matching structure with $\cdot$ on the algebra $\mathbb{C}_n [x]$. Then, $(\mathbb{C}_n [x], \cdot, \star)$ is isomorphic to one of the following pairwise non-isomorphic algebras:
		\[
			\mathbb{B}_1, \quad \mathbb{B}_s (\alpha), \quad 2\leq s\leq n.
		\]
	\end{theorem}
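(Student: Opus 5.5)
The proof has two parts: a reduction showing that every $\id$-matching structure is one of the listed algebras, and a separation argument showing that the listed algebras are pairwise non-isomorphic.

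\textbf{Reduction.} Let $\star$ be $\id$-matching with $\cdot$ on $\mathbb{C}_n[x]$. Since $\cdot$ is commutative, Lemma~\ref{id=interchangeable=total} shows that $\cdot$ and $\star$ are interchangeable. Theorem~\ref{Interchangeable} then applies, so $\star$ is given by \eqref{int6}, and $(\mathbb{C}_n[x],\cdot,\star)$ lies in the class $Id(\alpha_1,\dots,\alpha_n)$. We split into three cases.
\begin{itemize}
\item If $\alpha_1\neq 0$, Lemma~\ref{intlem1} gives $\mathbb{B}_1$.
\item If $\alpha_1=0$ and some $\alpha_s\neq0$ with $3\le s\le n$, take $s$ minimal. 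Lemma~\ref{intlem2} gives $\mathbb{B}_s(\alpha)$ with $\alpha=\alpha_2$.
\item Otherwise all parameters except $\alpha_2$ vanish, and we obtain $\mathbb{B}_2(\alpha)$.
\end{itemize}
This proves that the list is exhaustive.

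\textbf{Invariants.} By Lemma~\ref{Lemma10}, an isomorphism is an automorphism $\varphi$ of $(\mathbb{C}_n[x],\cdot)$, of the form \eqref{Aut}, that intertwines the second products. I extract three invariants from this.
\begin{enumerate}
\item Whether $\alpha_1=0$. The case $t=1$ of \eqref{inteq1} reads $A_1\alpha_1'=A_1^2\alpha_1$, so $\alpha_1'=A_1\alpha_1$ with $A_1\neq0$. Hence $\mathbb{B}_1$ is not isomorphic to any $\mathbb{B}_s(\alpha)$ with $s\ge2$.
\item The value of $\alpha_2$ when $\alpha_1=\alpha_1'=0$. The case $t=2$ of \eqref{inteq1} reads $A_2\alpha_1'+A_1^2\alpha_2'=A_1^2\alpha_2+2A_1A_2\alpha_1$. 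This reduces to $\alpha_2'=\alpha_2$, so the parameter $\alpha$ is invariant.
\item The index $s$. If $\varphi$ intertwines $\star'$ and $\star$ and also preserves $\cdot$, then it intertwines $\star'-\alpha\,\cdot$ and $\star-\alpha\,\cdot$. These new products are again of the form \eqref{int6}, with parameters $(0,0,\alpha_3,\dots,\alpha_n)$ and $(0,0,\alpha_3',\dots,\alpha_n')$. For $\mathbb{B}_s(\alpha)$ with $s\ge3$, the first nonzero parameter of the shifted product is in position $s$. For $\mathbb{B}_2(\alpha)$ the shifted product is zero. Formula \eqref{alpha_s} says the position of the first nonzero parameter is preserved, with $\alpha_s'=A_1^{2-s}\alpha_s\neq0$. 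Hence $s$ is an invariant, and this also separates $\mathbb{B}_2(\alpha)$ from all $\mathbb{B}_s(\beta)$ with $s\ge3$.
\end{enumerate}
Combining the three invariants shows the algebras in the list are pairwise non-isomorphic.

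\textbf{Main obstacle.} The delicate step is the invariance of $s$ when $\alpha_2\neq0$. Formula \eqref{alpha_s} was stated for the first nonzero parameter overall, and that would be $\alpha_2$ rather than $\alpha_s$. The subtraction trick above resolves this. Its key point is that $\star-\alpha\,\cdot$ again has the shape \eqref{int6}, so \eqref{inteq1} still applies to it. Everything else is routine comparison of low-order coefficients in \eqref{inteq1}.
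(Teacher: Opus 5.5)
Your proposal is correct. For exhaustiveness it coincides with the paper's proof. Like the paper, you reach the class $Id(\alpha_1,\dots,\alpha_n)$ through Lemma~\ref{id=interchangeable=total} and Theorem~\ref{Interchangeable}. You then split into the same three cases, settled by Lemma~\ref{intlem1}, Lemma~\ref{intlem2} and the remark giving $\mathbb{B}_2(\alpha)$.

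The difference is in the ``pairwise non-isomorphic'' clause. The paper never proves it; it only refers to relation~\eqref{alpha_s}. You supply an actual argument, and it checks out:
\begin{itemize}
\item The $t=1$ and $t=2$ instances of \eqref{inteq1} give $\alpha_1'=A_1\alpha_1$. When $\alpha_1=\alpha_1'=0$ they give $\alpha_2'=\alpha_2$.
\item You correctly note that \eqref{alpha_s} alone only sees $\alpha_2$ when $\alpha\neq0$, so it cannot separate $\mathbb{B}_s(\alpha)$ for different $s$.
\item Passing to $a\star b-\alpha\,a\cdot b$ fixes this. The new product is again of the form \eqref{int6}, with parameters $(\alpha_1,\alpha_2-\alpha,\alpha_3,\dots,\alpha_n)$, including at the boundary $i+j=n+1$. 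Once $\alpha$ is known to agree, $\varphi$ still intertwines the two shifted products. The shifted product vanishes exactly for $\mathbb{B}_2(\alpha)$, and otherwise its first nonzero parameter sits at position $s$. Applying \eqref{alpha_s} to it therefore makes $s$ an invariant.
\end{itemize}

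What your version adds is a complete proof of the non-isomorphism part of the statement, which the paper leaves implicit.
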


	\begin{proof}
		Since any $\id$-matching structure on the algebra $(\mathbb{C}_n [x],\cdot)$ has multiplication given by~\eqref{int7}, we distinguish cases according to the relation~\eqref{alpha_s}:
		\begin{itemize}
			\item If $\alpha_1\neq0,$ then by Lemma~\ref{intlem1}, the algebra $(\mathbb{C}_n [x], \cdot, \star)$ is isomorphic to $\mathbb{B}_{1};$
			\item If $\alpha_1 =0$ and $\alpha_s\neq0$ for some $3\leq s\leq n,$ then by Lemma~\ref{intlem2}, the algebra $(\mathbb{C}_n [x], \cdot, \star)$ is isomorphic to the algebra $\mathbb{B}_{s}(\alpha);$

			\item If $\alpha_1 =0$ and $\alpha_i =0$ for all $3\leq i\leq n,$ then the algebra $(\mathbb{C}_n [x], \cdot, \star)$ is isomorphic to the algebra $\mathbb{B}_{2}(\alpha).$
        \qedhere
		\end{itemize}
	\end{proof}

	\section{\texorpdfstring{$(12)$-matching structure on $\mathbb{C}_n [x]$}{(12)-matching structure on \mathbb{C}\_n [x]}}\label{12-matching-section}

	\begin{lemma}\label{Lem3.1}
		Let $\star$ be a~$(12)$-matching structure on the algebra $(\mathbb{C}_n [x],\cdot)$. Then the following relations hold:
		\[
			\begin{array}{lll}
				e_{i} \star e_j = e_1 \star e_{i+j-1}, & 2\leq i+j \leq n+1, \\[1mm]
				e_i \star e_j =0, & n+2 \leq i+j \leq 2n.
			\end{array}
		\]
	\end{lemma}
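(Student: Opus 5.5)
We prove the two relations separately, using only the $(12)$-matching identity $(a\cdot b)\star c = a\star(b\cdot c)$ from~\eqref{matching-ass-identity}, the multiplication rule~\eqref{mu_0}, and the fact that $e_k=e_1^k$.

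For the first relation we move factors of $e_1$ from the left argument to the right argument one at a time. Take $i\ge 2$ and $j\ge 1$ with $i+j\le n+1$. Then
\[
e_i\star e_j=(e_{i-1}\cdot e_1)\star e_j=e_{i-1}\star(e_1\cdot e_j)=e_{i-1}\star e_{j+1}.
\]
This is legitimate because $e_{i-1}\cdot e_1=e_i$ (as $i\le n$) and $e_1\cdot e_j=e_{j+1}$ (as $j+1\le n$, which follows from $i+j\le n+1$ and $i\ge2$). The sum of indices is unchanged, so we can iterate on $i$, by induction on $i$. After $i-1$ steps we reach $e_1\star e_{i+j-1}$, and $i+j-1\le n$ keeps this inside the basis. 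For $i=1$ the identity is trivial.

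For the vanishing relation, assume $i+j\ge n+2$. Then $i\ge 2$ and $j\ge 2$, since both indices are at most $n$. We run the same shift: $e_i\star e_j=(e_{i-1}\cdot e_1)\star e_j=e_{i-1}\star(e_1\cdot e_j)$. If $j=n$, then $e_1\cdot e_n=0$ and the product vanishes. Otherwise $j<n$, and $e_1\cdot e_j=e_{j+1}$, giving $e_{i-1}\star e_{j+1}$ with the same index sum. Since the left index strictly decreases while it stays at least $1$, and the index sum exceeds $n+1$, the right index must reach $n$ while the left index is still at least $2$. Indeed, when the right index equals $n$, the left index equals $i+j-n\ge 2$. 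At that moment one more shift produces $e_1\cdot e_n=0$. Hence $e_i\star e_j=0$.

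The main point needing care is this index bookkeeping in the second part. We must check that the shift never requires $e_0$, that is, that $i-1\ge 1$ at every step where it is applied. We also treat $e_1\cdot e_n=0$ as the truncation, reading~\eqref{mu_0} as $e_{i}\cdot e_j=0$ whenever $i+j>n$. Associativity of $\star$ and the second $(12)$-matching identity are not needed.
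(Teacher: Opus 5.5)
Your proof is correct and takes the same approach as the paper. The first relation is derived exactly as in the paper, by repeatedly applying $(e_{i-1}\cdot e_1)\star e_j=e_{i-1}\star e_{j+1}$. For the vanishing part, the paper does in one step what your iteration does gradually: it writes $e_i\star e_j=(e_1\cdot e_{i-1})\star e_j=e_1\star(e_{i-1}\cdot e_j)=0$, which holds because $(i-1)+j\ge n+1$; your step-by-step shift ending at $e_1\cdot e_n=0$ reaches the same conclusion.
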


	\begin{proof}
		Assume that the products $\cdot$ and $\star$ are $(12)$-matching. Using the identity~\eqref{matching-ass-identity} and multiplication~\eqref{mu_0}, we have
		\begin{center}
			$e_{i+1} \star e_j =(e_i \cdot e_1 )\star e_j = e_i \star (e_1\cdot e_j )=e_i \star e_{j+1}.$
		\end{center}
		By induction on $i,$ this implies $e_{i} \star e_j = e_1 \star e_{i+j-1}$ for $2\leq i+j \leq n+1.$

		Since $e_{i} \cdot e_j =0$ whenever $ i+j \geq n+1,$ considering
		\begin{center}
			$e_{i+1}\star e_j = (e_1 \cdot e_{i})\star e_j = e_1 \star (e_{i} \cdot e_j ),$
		\end{center}
		we get that $e_{i} \star e_j =0$ for $i+j \geq n+2.$
	\end{proof}

	\begin{theorem}\label{th15}
		Let $ \star$ be a~$(12)$-matching structure on the algebra $(\mathbb{C}_n [x], \cdot)$. Then, either $\cdot$ and $\star$ are $\id$-matching or multiplications have the form
		\begin{align*}
			\textbf{(12)}&(\alpha_1,\dots, \alpha_{n-1}, \beta_1,\dots, \beta_{n-1}): \\
			&\begin{cases}
				e_i \cdot e_j =e_{i+j}, & 2\leq i+j \leq n, \\[1mm]
				e_i \star e_j = \sum_{t=i+j-1}^{n-1}\alpha_{t-i-j+2}e_t +\beta_{i+j-1} e_n, & 2\leq i+j \leq n.
			\end{cases}
		\end{align*}
	\end{theorem}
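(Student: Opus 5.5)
By Lemma~\ref{Lem3.1}, the whole product $\star$ is determined by the vectors $f_m := e_1\star e_m$ for $1\le m\le n$. Indeed, $e_i\star e_j = f_{i+j-1}$ whenever $i+j\le n+1$, and $e_i\star e_j=0$ for $i+j\ge n+2$. The plan is to extract the remaining constraints from the second $(12)$-matching identity, $(a\star b)\cdot c = a\cdot(b\star c)$, and from associativity of $\star$, and then to express the $f_m$ in terms of $f_1=e_1\star e_1$.

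The first step uses $(e_1\star e_m)\cdot e_1 = e_1\cdot(e_m\star e_1)$. Since $e_m\star e_1 = f_m$, this reads $f_m\cdot e_1 = e_1\cdot f_m$, which is automatic because $\cdot$ is commutative. I would therefore use the mixed instances instead: $(e_i\star e_j)\cdot e_k = e_i\cdot(e_j\star e_k)$ gives $f_{i+j-1}\cdot e_k = e_i\cdot f_{j+k-1}$. Taking $j=1$ gives $f_i\cdot e_k = e_i\cdot f_k$, and in particular $f_m\cdot e_1 = e_{m}\cdot f_1$ (up to the range restrictions).

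Write $f_1=\sum_t \alpha_t e_t$. Multiplying by $e_1$ shifts coefficients and kills the $e_n$-component. So the relation $f_m\cdot e_1 = e_m\cdot f_1$ determines the coefficients of $f_m$ on $e_1,\dots,e_{n-1}$: the $e_t$-coefficient of $f_m$ must be $\alpha_{t-m+1}$ for $t\le n-1$. The $e_n$-coefficient of $f_m$ is left free; call it $\beta_m$. This produces exactly
\[
e_i\star e_j=\sum_{t=i+j-1}^{n-1}\alpha_{t-i-j+2}e_t+\beta_{i+j-1}e_n .
\]
Next I would deal with the boundary case $i+j=n+1$, i.e.\ with $f_n$. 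Here the relation forces $f_n\cdot e_1=0$, so $f_n$ lies in the span of $e_n$. I would then use $(e_1\star e_1)\cdot e_{n-1} = e_1\cdot(e_1\star e_{n-1})$ and the associativity of $\star$ to decide whether $f_n$ must vanish. The $\star$-products of elements whose indices sum to $n+1$ (namely $e_i\star e_j=\beta_n e_n$) distinguish the two cases of the theorem. If $\beta_n=\alpha_1$ and $\beta_m=\alpha_{n-m+1}$ for all $m$, the structure coincides with \eqref{int6}, hence is $\id$-matching by Theorem~\ref{Interchangeable}. Otherwise one should get $f_n=0$, and the structure has the displayed form with arbitrary $\beta$'s.

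The main obstacle is this boundary analysis. Specifically, I need to show that a nonzero $f_n$ (equivalently, nonzero products at $i+j=n+1$) forces full agreement with the $\id$-matching formula. The likely route is associativity of $\star$: compute $(e_1\star e_1)\star e_{n-1}$ and $e_1\star(e_1\star e_{n-1})$ and compare the $e_n$-coefficients. Note that $(e_1\star e_1)\star e_{n-1}$ involves $\alpha_1 f_n$, while $e_1\star(e_1\star e_{n-1})$ involves $\beta$'s times $f_n$. I expect this comparison to give $\alpha_1\beta_n=\alpha_1\beta_n$-type tautologies in some cases, so a careful case split on $\alpha_1\ne0$ versus $\alpha_1=0$ will be needed. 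In the case $\alpha_1\neq0$ I would try to use Lemma~\ref{12_m utation}-style arguments by adjoining a unit.

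Finally, I would verify directly that every product of the displayed form is associative and $(12)$-matching. These checks are routine, since all products land in $\langle e_{i+j-1},\dots,e_n\rangle$ and the $\beta$-terms are annihilated by $\cdot$ and by further $\star$-multiplication.
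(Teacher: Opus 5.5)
Your derivation of the general shape is correct and is essentially the paper's. The instance $f_m\cdot e_1=e_m\cdot f_1$ (the paper uses the equivalent $(e_1\star e_1)\cdot e_i=e_1\cdot(e_1\star e_i)$) gives $f_m=\sum_{t=m}^{n-1}\alpha_{t-m+1}e_t+\beta_m e_n$. Hence the displayed formula holds for $2\le i+j\le n+1$, with $f_n=\beta_n e_n$.

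The gap is the step you yourself call the main obstacle. You never obtain a constraint on $\beta_n$, and the route you sketch does not lead to one.
\begin{itemize}
\item Neither $(12)$-identity can constrain $\beta_n$. In the second one, $f_n=\beta_n e_n$ is always annihilated by $\cdot$; for instance $(e_1\star e_1)\cdot e_{n-1}=e_1\cdot(e_1\star e_{n-1})$ reduces to $\alpha_1e_n=\alpha_1e_n$. The first one only ever equates $f_n$ with itself. So only associativity of $\star$ can constrain $\beta_n$.
\item Your associativity computation at $i=n-1$ is off. In fact $(e_1\star e_1)\star e_{n-1}=\alpha_1 f_{n-1}+\alpha_2 f_n$; the term $\alpha_1 f_n$ appears at $i=n$. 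Comparing with $e_1\star(e_1\star e_{n-1})=\alpha_1 f_{n-1}+\beta_{n-1}f_n$ is not a tautology: it yields $\beta_n(\beta_{n-1}-\alpha_2)=0$, which alone is far from enough.
\item Adjoining a unit cannot work, and it fails precisely in the case $\alpha_1\neq0$ where you want to use it. Suppose $\star$ extended to a $(12)$-matching associative product on $\mathbb{C}1\oplus\mathbb{C}_n[x]$. Then Lemma~\ref{12_m utation} would give $e_1\star e_1=e_1\cdot(1\star 1)\cdot e_1\in\operatorname{span}\{e_2,\dots,e_n\}$, i.e.\ $\alpha_1=0$.
\end{itemize}

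The missing idea is simply to compare the $e_n$-coefficients of $e_1\star(e_1\star e_i)$ and $(e_1\star e_1)\star e_i$ for \emph{every} $2\le i\le n$. Use $e_1\star e_n=\beta_n e_n$, $e_n\star e_i=0$ for $i\ge 2$, and $e_t\star e_i=f_{t+i-1}$ for $t+i\le n+1$. Then both coefficients equal $\sum_{m=i}^{n-1}\alpha_{m-i+1}\beta_m$ plus, respectively, $\beta_i\beta_n$ and $\alpha_{n-i+1}\beta_n$, so
\[
\beta_n(\beta_i-\alpha_{n-i+1})=0,\qquad 2\le i\le n .
\]
No case split on $\alpha_1$ is needed.
\begin{itemize}
\item If $\beta_n\neq 0$, then $\beta_i=\alpha_{n-i+1}$ for all $2\le i\le n$, including $\beta_n=\alpha_1$. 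Setting $\alpha_n:=\beta_1$, the product is exactly \eqref{int6}, hence $\id$-matching.
\item If $\beta_n=0$, every product with $i+j=n+1$ vanishes and you land in the family $\textbf{(12)}$.
\end{itemize}
Your closing converse check is not required by the statement, though it is correct.
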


	\begin{proof}
		Assume that the products $\cdot$ and $\star$ are $ (12)$-matching. Set
		\begin{center}
			$e_1\star e_i =\sum_{t=1}^{n}\alpha_{i,t}e_t, \quad 1 \leq i \leq n-1.$
		\end{center}
		We first apply the identity~\eqref{matching-ass-identity} to the triple
		$(e_1, e_1, e_i )$ where $2\leq i \leq n:$
		\begin{center}
			$(e_1 \star e_1 )\cdot e_i =\sum_{t=1}^{n}\alpha_{1,t}e_t \cdot e_i =\sum_{t=i+1}^{n}\alpha_{1,t-i}e_t.$
		\end{center}
		On the other hand,
		\begin{center}
			$e_1\cdot (e_1\star e_i )= e_1\cdot \sum_{t=1}^{n}\alpha_{i,t}e_t = \sum_{t=2}^{n}\alpha_{i,t-1}e_t.$
		\end{center}
		Comparing coefficients of the basis elements, we obtain
		\begin{center}
			$\alpha_{i,s}=0, \quad 1\leq s\leq i-1;$ \quad $\alpha_{i,t}=\alpha_{1,t-i+1}, \quad i\leq t\leq n-1, \ 2\leq i\leq n.$
		\end{center}
		Therefore,
		\begin{center}
			$ e_1 \star e_i = \sum_{t=i}^{n-1}\alpha_{1,t-i+1}e_t + \alpha_{i,n} e_n, \quad 1\leq i\leq n.$
		\end{center}
		Introduce the notation $\alpha_t:=\alpha_{1,t},$ $1\leq t\leq n-1,$ $\beta_{i}:=\alpha_{i,n},$ $1\leq i\leq n.$
		Using Lemma~\ref{Lem3.1}, we obtain
		\begin{equation}\label{eq11}
			e_i \star e_j = \sum_{t=i+j-1}^{n-1}\alpha_{t-i-j+2}e_t + \beta_{i+j-1} e_n, \qquad 2 \leq i+j \leq n+1.
		\end{equation}
		It remains to impose associativity. Consider
		\begin{align*}
			e_1 \star (e_1 \star e_i ) &= \sum_{m=i}^{n-1} \sum_{t=1}^{m-i+1} \alpha_{t} \alpha_{m-t-i+2}e_m + \sum_{t=i}^{n-1} \alpha_{t-i+1}\beta_{t}e_n + \beta_i \beta_n e_n;\\
			(e_1 \star e_1 ) \star e_i
			&=\sum_{m=i}^{n-1} \sum_{t=1}^{m-i+1} \alpha_{t} \alpha_{m-t-i+2}e_m + \sum_{t=i}^{n-1} \alpha_{t-i+1}\beta_{t}e_n + \alpha_{n-i+1}\beta_{n}e_n.
		\end{align*}
		It shows that the coefficients of $e_m$ for
		$m \leq n-1$ coincide, and comparing the coefficients of $e_n$, we obtain
		\[
			\beta_n (\beta_i -\alpha_{n-i+1})=0, \quad 2\leq i\leq n.
		\]
		If $\beta_i = \alpha_{n-i+1}$ for all $2\leq i\leq n,$ then we get that the equality~\eqref{eq11} coincides with the equality~\eqref{int6}. Thus, in this case, we obtain that $\cdot$ and $\star$ are $\id$-matching.

		If $\beta_i \neq \alpha_{n-i+1}$ for some $2\leq i\leq n,$ then $\beta_n =0$ and we have the family of algebras $\textbf{(12)}(\alpha_1,\dots, \alpha_{n-1}, \beta_1,\dots, \beta_{n-1}).$
	\end{proof}

	Similarly to Lemma~\ref{Lemma10}, we provide a~criterion for determining when two algebras from the class $\textbf{(12)}(\alpha_1,\dots, \alpha_{n-1}, \beta_1,\dots, \beta_{n-1})$ are isomorphic.

	\begin{lemma}\label{lem16} 
    Two algebras, $\mathfrak{S}^{\alpha_1,\dots,\alpha_{n-1}, \beta_{1},\dots, \beta_{n-1}}$ and $\mathfrak{S}'^{\alpha_1 ',\dots,\alpha_{n-1}', \beta_{1}',\dots, \beta_{n-1}'}$, from the same class $\textbf{(12)}(\alpha_1,\dots, \alpha_{n-1}, \beta_1,\dots, \beta_{n-1})$ are isomorphic if and only if there exist $A_1 \in \mathbb{C}^*,$ $A_2, \dots, A_n \in \mathbb{C},$ such that the following relations holds:
    \begin{gather*}
        \sum_{i=1}^t\sum_{k_1 +\cdots+k_i =t} A_{k_1} \dots A_{k_i}\alpha_i ' = \sum_{i=1}^{t} \sum_{j=1}^{t-i+1} A_i A_j \alpha_{t-i-j+2}, \\
        \sum_{i=1}^{n-1} \sum_{k_1 +\cdots+k_i =n} A_{k_1} \dots A_{k_i} \alpha_i ' + A_1^n \beta'_{1} = \sum_{2\leq i+j\leq n} A_i A_j \beta_{i+j-1}. \\
        \sum_{m=i}^{n-1} \sum_{k_1 +\cdots+k_m =n} \alpha_{m-i+1}' A_{k_1} \dots A_{k_m} +\beta'_{i} A_1^n = \sum_{j=1}^{n-i} \sum_{m=i}^{n-j} A_j \sum_{k_1 + \dots +k_i =m} A_{k_1} \dots A_{k_i} \beta_{j+m-1},
    \end{gather*}
    for all $1 \leq t \leq n-1$ and $2 \leq i \leq n-1$.
	\end{lemma}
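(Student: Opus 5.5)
The plan is to mimic the proof of Lemma~\ref{Lemma10}. An isomorphism between $(\mathbb{C}_n[x],\cdot,\star')$ and $(\mathbb{C}_n[x],\cdot,\star)$ must in particular preserve $\cdot$. By Theorem~\ref{3.1} it is therefore an automorphism $\varphi$ of the form~\eqref{Aut}, with parameters $A_1\in\mathbb{C}^*$ and $A_2,\dots,A_n\in\mathbb{C}$. The whole condition then reduces to $\varphi(e_i\star' e_j)=\varphi(e_i)\star\varphi(e_j)$ for all $i,j$.

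The first reduction uses Lemma~\ref{Lem3.1}, which gives $e_i\star e_j=e_1\star e_{i+j-1}$ (and $0$ when $i+j\ge n+2$). We also have $\varphi(e_i)\star\varphi(e_j)=\varphi(e_1)\star\varphi(e_{i+j-1})$. This holds because $\star$ is $(12)$-matching, so $(a\cdot b)\star c=a\star(b\cdot c)$, and because $\varphi(e_i)=\varphi(e_1)^{\cdot i}$. Hence it suffices to check the equations
\[
\varphi(e_1\star' e_i)=\varphi(e_1)\star\varphi(e_i),\qquad 1\le i\le n-1 .
\]
The case $i=n$ gives $0=0$, since $e_1\star e_n=0$ and $\varphi(e_n)=A_1^ne_n$.

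Next I expand both sides using formula~\eqref{eq11} with $\beta_n=0$, in two cases.

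\emph{Case $i=1$.} The left side is $\sum_{m=1}^{n-1}\alpha'_m\varphi(e_m)+\beta'_1A_1^ne_n$. The right side is $\sum_{p,q}A_pA_q\,e_p\star e_q$. For $t\le n-1$, the coefficient of $e_t$ only involves the $\alpha$-parts, and exactly as in Lemma~\ref{Lemma10} this gives the first family of relations. The coefficient of $e_n$ gives the second relation. On the left it collects $\sum_{m}\sum_{k_1+\dots+k_m=n}A_{k_1}\cdots A_{k_m}\alpha'_m$ plus $A_1^n\beta'_1$. On the right, each $e_p\star e_q$ with $p+q\le n$ contributes $\beta_{p+q-1}$, which gives $\sum_{2\le p+q\le n}A_pA_q\beta_{p+q-1}$.

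\emph{Case $2\le i\le n-1$.} Expand $\varphi(e_1\star' e_i)=\sum_{m=i}^{n-1}\alpha'_{m-i+1}\varphi(e_m)+\beta'_iA_1^ne_n$. On the right, write $\varphi(e_i)=\sum_{m}\bigl(\sum_{k_1+\dots+k_i=m}A_{k_1}\cdots A_{k_i}\bigr)e_m$ and compute $\sum_j A_j\,e_j\star(\cdot)$.

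For the coefficients of $e_t$ with $t<n$, I expect these to follow automatically from the $i=1$ relations. The right side has the same $\alpha$-structure as $\varphi(e_1)\star\varphi(e_1)$ multiplied by $\varphi(e_1)^{\cdot(i-1)}$ through the $(12)$-matching/$\cdot$ identities, and these identities hold modulo $e_n$ for the $\alpha$-part. Concretely, $\varphi(e_1)\star\varphi(e_i)$ and $\varphi(e_1\star e_1)\cdot\varphi(e_{i-1})$ agree below degree $n$. Only the coefficient of $e_n$ yields genuinely new information, and that coefficient is the third displayed relation. Its right side is $\sum_j\sum_m A_j(\sum A_{k_1}\cdots A_{k_i})\beta_{j+m-1}$, because the $e_n$-coefficient of $e_j\star e_m$ is $\beta_{j+m-1}$.

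Conversely, if $A$'s satisfying the relations exist, define $\varphi$ by~\eqref{Aut}. The relations give $\varphi(e_1\star'e_i)=\varphi(e_1)\star\varphi(e_i)$ for every $i$, and the reduction in the first step extends this to all pairs $(i,j)$.

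The main obstacle is the claim that for $i\ge2$ the lower coefficients impose nothing new. I would prove it by writing $\star$ modulo $\langle e_n\rangle$ as a product of $\mathrm{Id}(\alpha)$-type, which is $\id$-matching modulo $e_n$ by Theorem~\ref{Interchangeable}. It then commutes with multiplication by $\varphi(e_1)$, so the $e_t$ $(t<n)$ components reduce to the $i=1$ identity multiplied by $\varphi(e_1)^{\cdot(i-1)}$.
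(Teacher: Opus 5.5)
Your proposal is correct and follows the paper's route: reduce to an automorphism $\varphi$ of the form~\eqref{Aut}, compare all coefficients in $\varphi(e_1\star' e_1)=\varphi(e_1)\star\varphi(e_1)$, and compare the $e_n$-coefficients in $\varphi(e_1\star' e_i)=\varphi(e_1)\star\varphi(e_i)$ for $2\le i\le n-1$. Your reduction of $(i,j)$ to $(1,i+j-1)$ via Lemma~\ref{Lem3.1}, together with the observation that modulo the central ideal $\langle e_n\rangle$ both $\star$ and $\star'$ have the form~\eqref{int6} (hence are $\id$-matching with $\cdot$ by the final computation in the proof of Theorem~\ref{Interchangeable}), supplies the justification that the paper only asserts with ``all remaining products satisfy $0=0$''; one small correction: below degree $n$, the expression to compare with $\varphi(e_1)\star\varphi(e_i)$ is $(\varphi(e_1)\star\varphi(e_1))\cdot\varphi(e_{i-1})$, not $\varphi(e_1\star e_1)\cdot\varphi(e_{i-1})$.
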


	\begin{proof}
		Let $\varphi: \mathfrak{S}'^{\alpha_1 ',\dots,\alpha_{n-1}', \beta_{1}',\dots, \beta_{n-1}'} \rightarrow \mathfrak{S}^{\alpha_1,\dots,\alpha_{n-1}, \beta_{1},\dots, \beta_{n-1}}$ be an isomorphism of algebras. Since, $\varphi$ is an automorphism of the algebra $(\mathbb{C}_n [x],\cdot),$ it follows from Theorem~\ref{3.1} that $\varphi$ has the form given in~\eqref{Aut}. Consider the identity $\varphi(e_i \star' e_j ) = \varphi(e_i )\star \varphi(e_j ).$

		First, we compute
        \begin{align*}
            \varphi(e_1 \star' e_1 ) 
            &= \varphi\Big(\sum_{i=1}^{n-1}\alpha_{i}'e_i + \beta'_{1}e_n\Big) \\
            &=\sum_{i=1}^{n-1} \alpha_i '\sum_{t=i}^{n}\sum_{k_1 +\dots +k_i =t} A_{k_1}\dots A_{k_i}e_t + \beta'_{1} A_1^n e_n \\
            &= \sum_{t=1}^{n-1} \sum_{i=1}^{t} \sum_{k_1 +\dots +k_i =t}\alpha_i ' A_{k_1} \dots A_{k_i}e_t + \left( \sum_{i=1}^{n-1} \sum_{k_1 +\cdots+k_i =n} \alpha_i ' A_{k_1} \dots A_{k_i} +\beta'_{1} A_1^n\right) e_n.
        \end{align*}
		On the other hand,
		\[
			\begin{array}{lcl}
				\varphi(e_1 ) \star \varphi(e_1 ) &=& \sum_{i=1}^n A_i e_i \star \sum_{j=1}^n A_j e_j =
				\sum_{i=1}^n \sum_{j=1}^n A_i A_j (e_i \star e_j )\\[1mm]
				&=&
				\sum_{i=1}^n \sum_{j=1}^n A_i A_j \left( \sum_{t=i+j-1}^{n-1} \alpha_{t-i-j+2} e_t + \beta_{i+j-1}e_n \right) \\[1mm]
				&=&
				\sum_{i=1}^n \sum_{j=1}^n \sum_{t=i+j-1}^{n-1} A_i A_j \alpha_{t-i-j+2} e_t + \sum_{2\leq i+j\leq n} A_i A_j \beta_{i+j-1}e_n \\[1mm]
				&=&
				\sum_{t=1}^{n-1}\sum_{i=1}^{t} \sum_{j=1}^{t-i+1} A_i A_j \alpha_{t-i-j+2} e_t + \sum_{2\leq i+j\leq n} A_i A_j \beta_{i+j-1}e_n.
			\end{array}
		\]
		By comparing the coefficients of the basis elements, we obtain
		\begin{gather*}
			\sum_{i=1}^t\sum_{k_1 +\cdots+k_i =t} A_{k_1} \dots A_{k_i}\alpha_i ' = \sum_{i=1}^{t} \sum_{j=1}^{t-i+1} A_i A_j \alpha_{t-i-j+2}, \ \ 1\leq t\leq n-1. \\
			\sum_{i=1}^{n-1} \sum_{k_1 +\cdots+k_i =n} A_{k_1} \dots A_{k_i} \alpha_i ' + A_1^n \beta'_{1} = \sum_{2\leq i+j\leq n} A_i A_j \beta_{i+j-1}.
		\end{gather*}
		Next, we consider
		\[
			\begin{array}{lcl}
				\varphi(e_1 \star' e_i ) &= & \sum_{m=i}^{n-1}\alpha_{m-i+1}'\varphi(e_m )+ \beta'_{i}\varphi(e_n ) =\sum_{m=i}^{n-1} \alpha_{m-i+1}'\sum_{t=m}^{n}G_t^{(m)} e_t + \beta'_{i} A_1^n e_n \\[1mm]
				&=& \sum_{t=i}^{n-1} \sum_{m=i}^{t} \alpha_{m-i+1}' G_t^{(m)}e_t + \left( \sum_{m=i}^{n-1} \alpha_{m-i+1}' G_n^{(m)} +\beta'_{i} A_1^n\right) e_n,
			\end{array}
		\]
		where $G_m^{(i)}= \sum_{k_1 + \dots +k_i =m} A_{k_1} \dots A_{k_i}$.

		On the other hand,
        \begin{align*}
            \varphi(e_1 ) \star \varphi(e_i )
            &= \sum_{j=1}^n A_j e_j \star \sum_{m=i}^n G_m^{(i)} e_m \\
            &= \sum_{j=1}^n \sum_{m=i}^n A_j G_m^{(i)} (e_j \star e_m ) \\
            &= \sum_{j=1}^n \sum_{m=i}^n A_j G_m^{(i)} \left( \sum_{t=j+m-1}^{n-1} \alpha_{t-j-m+2} e_t + \beta_{j+m-1}e_n \right) \\
            &= \sum_{j=1}^n \sum_{m=i}^n \sum_{t=j+m-1}^{n-1} A_j G_m^{(i)} \alpha_{t-j-m+2} e_t + \sum_{j=1}^{n-i} \sum_{m=i}^{n-j} A_j G_m^{(i)} \beta_{j+m-1}e_n \\
            &= \sum_{t=i}^{n-1} \sum_{j=1}^{t-i+1} \sum_{m=i}^{t-j+1} A_j G_m^{(i)} \alpha_{t-j-m+2} e_t + \sum_{j=1}^{n-i} \sum_{m=i}^{n-j} A_j G_m^{(i)} \beta_{j+m-1}e_n.
        \end{align*}
		By comparing the coefficients of the basis element $e_n$, we obtain
		\[
			\sum_{m=i}^{n-1} \sum_{k_1 +\cdots+k_m =n} \alpha_{m-i+1}' A_{k_1} \dots A_{k_m} +\beta'_{i} A_1^n = \sum_{j=1}^{n-i} \sum_{m=i}^{n-j} A_j \sum_{k_1 + \dots +k_i =m} A_{k_1} \dots A_{k_i} \beta_{j+m-1}.
		\]
		Similarly to Lemma~\ref{Lemma10}, all remaining products satisfy
		$0 = 0.$
	\end{proof}

	From Theorem~\ref{th15}, we obtain that $I = \operatorname{span}\{e_n\}$ is a~central ideal of the $(12)$-matching structure on $(\mathbb{C}_n [x], \cdot).$ Moreover, the quotient algebra by $I$ is an $\id$-matching structure on $(\mathbb{C}_{n-1}[x], \cdot).$ Thus,
	\[
		\textbf{(12) }(\alpha_1,\dots, \alpha_{n-1}, \beta_1,\dots, \beta_{n-1}) / I \cong Id(\alpha_1, \dots,\alpha_{n-1}).
	\]
	Hence, using Theorem~\ref{int}, we see that any algebra in the $\textbf{(12) }(\alpha_1,\dots, \alpha_{n-1}, \beta_1,\dots, \beta_{n-1})$ class has multiplication in one of the following forms:
	\[
		\textbf{(12)}_1 (\beta_1,\dots,\beta_{n-1}):\left\{
		\begin{array}{ll}
			e_i \cdot e_j =e_{i+j}, & 2\leq i+j \leq n, \\[1mm]
			e_i \star e_j =e_{i+j-1}+\beta_{i+j-1}e_n, & 2\leq i+j \leq n,
		\end{array}
		\right.
	\]
	where $(\beta_2, \dots, \beta_{n-1}) \neq (0, \dots, 0)$;
	\[
		\textbf{(12)}_2 (\alpha,\beta_1,\dots,\beta_{n-1}):\left\{
		\begin{array}{ll}
			e_i\cdot e_j =e_{i+j}, & 2\leq i+j \leq n, \\[1mm]
			e_i \star e_j = \alpha e_{i+j} +\beta_{i+j-1}e_n, & 2\leq i+j \leq n-1,\\[1mm]
			e_i \star e_j = \beta_{n-1}e_n, & i+j=n,
		\end{array}
		\right.
	\]
	where $(\beta_2, \dots, \beta_{n-2}, \beta_{n-1}) \neq (0, \dots, 0, \alpha)$;
	\begin{align*}
		\textbf{(12)}_s &(\alpha, \beta_1,\dots,\beta_{n-1}): \\
        &\left\{
		\begin{array}{ll}
			e_i\cdot e_j =e_{i+j}, & 2\leq i+j \leq n, \\[1mm]
			e_i \star e_j = \alpha e_{i+j} + e_{s+i+j-2}+\beta_{i+j-1}e_n, & 2\leq i+j \leq n-s+1, \\[1mm]
			e_i \star e_j = \alpha e_{i+j} +\beta_{i+j-1}e_n, & n-s+2\leq i+j \leq n-1,\\[1mm]
			e_i \star e_j = \beta_{n-1}e_n, & i+j=n, \ 3\leq s\leq n-1,
		\end{array}
		\right.
	\end{align*}
	where $(\beta_2, \dots,\beta_{n-s}, \beta_{n-s+1}, \beta_{n-s+2}, \dots, \beta_{n-2}, \beta_{n-1}) \neq (0, \dots, 0, 1_{n-s+1}, 0, \dots, 0, \alpha).$

	Now we will examine these families, treating each algebra individually.

	\begin{lemma}\label{12-lem-alpha_1} Let $\mathfrak{S}^{\beta_{1},\dots, \beta_{n-1}}$ be an algebra from the class $\textbf{(12)}_1 (\beta_{1},\dots,\beta_{n-1})$. Then it is isomorphic to the algebra:
		\[
			\mathbb{A}_1 (\beta_2,\dots,\beta_{n-1}):\left\{
			\begin{array}{ll}
				e_i \cdot e_j =e_{i+j}, & 2\leq i+j \leq n, \\[1mm]
				e_1 \star e_1 =e_{1}, \\[1mm]
				e_i \star e_j =e_{i+j-1}+\beta_{i+j-1}e_n, & 3\leq i+j \leq n.
			\end{array}
			\right.
		\]
	\end{lemma}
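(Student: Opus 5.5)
The plan is to remove the single parameter $\beta_1$ by one explicit automorphism of $(\mathbb{C}_n[x],\cdot)$. I would use Theorem~\ref{3.1} with $A_1=1$, $A_2=\dots=A_{n-1}=0$ and $A_n$ to be chosen, and then check that it fixes every other structure constant. Using the criterion of Lemma~\ref{lem16} in full generality is unnecessary here, since a direct check suffices.

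\emph{Step 1: the automorphism.} Take $\varphi(e_1)=e_1+A_ne_n$. Since $e_n\cdot e_k=0$ for all $k\geq 1$, we get $\varphi(e_i)=\varphi(e_1)^i=e_i$ for every $2\leq i\leq n$. So $\varphi$ moves only $e_1$, adding a multiple of $e_n$.

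\emph{Step 2: products of $e_n$ in $\textbf{(12)}_1$.} I need all $\star$-products involving $e_n$. By Lemma~\ref{Lem3.1}, $e_n\star e_j=0$ and $e_j\star e_n=0$ for $j\geq 2$, because then $i+j\geq n+2$. The remaining products $e_1\star e_n=e_n\star e_1$ have $i+j=n+1$, so by \eqref{eq11} they equal $\beta_n e_n$ (the $\alpha$-sum is empty). In the non-$\id$-matching class of Theorem~\ref{th15} we have $\beta_n=0$, so these products vanish as well. Hence $e_n$ annihilates everything under $\star$.

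\emph{Step 3: compare the two products.} Let $\star'$ be the product of $\textbf{(12)}_1$ with parameters $\beta'_k$.
\begin{itemize}
\item For $e_1\star' e_1$: by Step 2, $\varphi(e_1)\star\varphi(e_1)=e_1\star e_1=e_1+\beta_1e_n$. On the other side, $\varphi(e_1\star' e_1)=\varphi(e_1+\beta_1'e_n)=e_1+(A_n+\beta'_1)e_n$. Hence $\beta'_1=\beta_1-A_n$, and choosing $A_n=\beta_1$ gives $\beta'_1=0$.
\item For $i+j\geq 3$: we have $e_i\star' e_j=e_{i+j-1}+\beta'_{i+j-1}e_n$ with $i+j-1\geq 2$, so $\varphi$ fixes it. Also $\varphi(e_i)\star\varphi(e_j)=e_i\star e_j$, because the extra $A_ne_n$ terms are killed by Step 2.
\end{itemize}
Therefore $\beta'_k=\beta_k$ for all $k\geq 2$, and $\varphi$ is an isomorphism onto $\mathbb{A}_1(\beta_2,\dots,\beta_{n-1})$. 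The nondegeneracy condition $(\beta_2,\dots,\beta_{n-1})\neq 0$ is untouched, since these parameters do not change.

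The only delicate point is Step 2, specifically the vanishing of $e_1\star e_n$ and $e_n\star e_1$. This relies on the dichotomy in Theorem~\ref{th15}, which forces $\beta_n=0$ outside the $\id$-matching case. I would state explicitly that $\textbf{(12)}_1$ inherits this property.
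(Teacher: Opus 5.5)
Your proof is correct and uses the same automorphism as the paper ($A_1=1$, $A_2=\dots=A_{n-1}=0$, $A_n=\beta_1$), which gives $\beta_1'=0$ and $\beta_k'=\beta_k$ for $k\geq 2$. The paper reads these values off the general relations of Lemma~\ref{lem16}, whereas you check them directly using that $e_n$ annihilates everything under $\star$ (since $\beta_n=0$); this makes explicit a point the paper leaves implicit.
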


	\begin{proof}
		Let $\mathfrak{S}^{\beta_{1},\dots, \beta_{n-1}}$ be an algebra from the class $\textbf{(12)}_1 (\beta_{1},\dots,\beta_{n-1})$. It means that
		\begin{center}
			$\alpha_1 =1$ and $\alpha_2 =\dots =\alpha_{n-1}=0.$
		\end{center}
		Then, from Lemma~\ref{lem16}, we have that $A_1 =1,$ $A_i =0$ for $2 \leq i \leq n-1.$
		Now, by choosing $A_n = \beta_1,$ we obtain $\beta_1 ' = 0$. Substituting these values into the relation given in Lemma~\ref{lem16}, we obtain $\beta_i ' = \beta_i$ for $2 \le i \le n-1.$

		Thus, we have that
		$\mathfrak{S}^{ \beta_{1},\dots, \beta_{n-1}}\cong \mathbb{A}_1 (\beta_{2},\dots,\beta_{n-1})$.
	\end{proof}

	\begin{lemma}\label{12-lem-alpha_2}
		Let $\mathfrak{S}^{\alpha, \beta_{1},\dots, \beta_{n-1}}$ be an algebra from the class $\textbf{(12)}_2 (\alpha, \beta_{1},\dots,\beta_{n-1})$.
		\begin{enumerate}
			\item If $\beta_{n-1} \neq \alpha,$ then it is isomorphic to the algebra
			\[
				\mathbb{A}_2 (\alpha,\beta)_{\beta \neq \alpha}:\left\{
				\begin{array}{ll}
					e_i\cdot e_j =e_{i+j}, & 2\leq i+j \leq n, \\[1mm]
					e_i\star e_j =\alpha e_{i+j}, & 2\leq i+j \leq n-1, \\[1mm]
					e_i \star e_j =\beta e_{n}, & i+j = n.
				\end{array}
				\right.
			\]
			\item If $\beta_{n-1} = \alpha, $ $\beta_{n-i}=0,$ $2\leq i\leq r-1$ and $\beta_{n-r} \neq 0, \ 2 \le r \le {n-1},$ then it is isomorphic to the algebra
			\[
				\mathbb{A}_{3,r}(\alpha):\left\{
				\begin{array}{ll}
					e_i\cdot e_j =e_{i+j}, & 2\leq i+j \leq n, \\[1mm]
					e_i\star e_j =\alpha e_{i+j}, & 2\leq i+j \leq n-r, \\[1mm]
					e_i\star e_j =\alpha e_{n-r+1}+e_n, & i+j =n-r+1, \\[1mm]
					e_i\star e_j =\alpha e_{i+j}, & n-r+2 \leq i+j \leq n,
				\end{array}
				\right. \ \ 2\leq r\leq n-1.
			\]
		\end{enumerate}
	\end{lemma}

	\begin{proof}
		Let $\mathfrak{S}^{\alpha, \beta_{1},\dots, \beta_{n-1}}$ be an algebra from the class $\textbf{(12)}_2 (\alpha, \beta_{1},\dots,\beta_{n-1})$. It means that
		\begin{center}
			$\alpha_1 =0,$ $\alpha_3 =\dots =\alpha_{n-1}=0.$
		\end{center}

		\begin{enumerate}
			\item[\textbf{Case 1:}] $\beta_{n-1} \neq \alpha.$ From Lemma~\ref{lem16}, we have that
			\begin{center}
				$\beta_{n-1}'=\beta_{n-1}, \quad \beta_{n-2}'=\frac{1}{A_1} \beta_{n-2} +\frac{A_2}{A_1^2} (n-1) (\beta_{n-1}-\alpha).$
			\end{center}
			By choosing $A_2 = \frac{A_1 \beta_{n-2}}{(n-1)(\alpha-\beta_{n-1})}$, we obtain $\beta_{n-2}'=0.$ Hence, without loss of generality, we may assume that $\beta_{n-2}=0.$ We now prove by induction that one may further assume that $\beta_{n-i} = 0$ for all $2 \le i \le n-1$.

			The base case $i=2$ has already been established.

			Consider $i=3$. Assume $\beta_{n-2}=0$, then choosing $A_2 = 0$ in Lemma~\ref{lem16}, we obtain
			\begin{center}
				$\beta_{n-2}'=0, \quad \beta_{n-3}'=\frac{1}{A_1^2} \beta_{n-3} +\frac{A_3}{A_1^3} (n-2) (\beta_{n-1}-\alpha).$
			\end{center}
			By taking $A_3 =\frac{A_1 \beta_{n-3}}{(n-2)(\alpha-\beta_{n-1})},$ we obtain $\beta_{n-3}'=0.$
			Assume now that for some $k \ge 3,$ we have $\beta_{n-2}= \dots= \beta_{n-k+1}=0,$ then choosing $A_2 =\dots=A_{k-1}=0$
			in Lemma~\ref{lem16}, we have that
			\begin{center}
				$\beta_{n-i}'=0, \ 2 \leq i \leq k-1, \quad \beta_{n-k}'=\frac{1}{A_1^{k-1}} \beta_{n-k} +\frac{A_k}{A_1^k} (n-k+1) (\beta_{n-1}-\alpha).$
			\end{center}
			Choosing $A_k =\frac{A_1 \beta_{n-k}}{(n-k+1)(\alpha-\beta_{n-1})},$ we obtain $\beta_{n-k}'=0.$
			Thus, by induction, any algebra from the class $\textbf{(12)}_2 (\alpha, \beta_{1},\beta_{2},\dots,\beta_{n-1})$ is isomorphic to the algebra with parameters $\beta_{n-1}' = \beta$ and $\beta_{n-i}' = 0$ for all $2 \le i \le n-1$, that is, to $\mathbb{A}_2 (\alpha,\beta)_{\beta \neq \alpha}$.

			\item[\textbf{Case 2:}] $\beta_{n-1} = \alpha.$ Then there exist $2 \le r \le {n-1}$ such that $\beta_{n-r} \neq 0.$ Assume that
			$\beta_{n-i}=0$ for all $2\leq i\leq r-1$ and $\beta_{n-r} \neq 0.$ Choosing $A_1 = \sqrt[r-1]{\beta_{n-r}},$ $A_2 = -\frac{\beta_{n-r-1}}{n-r}$ in Lemma~\ref{lem16}, we obtain $\beta_{n-r}'=1,$ $\beta_{n-r-1}'=0.$

			Hence, without loss of generality, we may assume that $\beta_{n-r}=1,$ $\beta_{n-r-1}=0.$
			We now prove by induction that one may further assume $\beta_{n-i} = 0$ for all $r+1 \le i \le n-1$.

			The base case $i=r+1$ has already been established. Consider $i=r+2$, that is $\beta_{n-r}=1,$ $\beta_{n-r-1}=0.$ Choosing $A_1 = 1,$ $A_2 = 0$ in Lemma~\ref{lem16}, we obtain
			\begin{center}
				$\beta_{n-r}'=1, \quad \beta_{n-r-1}'=0, \quad \beta_{n-r-2}'=\beta_{n-r-2} +(n-r-1) A_{3}.$
			\end{center}
			By taking $A_3 =-\frac{\beta_{n-r-2}}{n-r-1},$ we obtain $\beta_{n-r-2}'=0.$
			Assume now that for some $k \ge r+2,$ we have
			\begin{center}
				$\beta_{n-1} = \alpha,$ $\beta_{n-r} =1,$ $\beta_{n-2}= \dots= \beta_{n-r+1} = \beta_{n-r-1}= \dots =\beta_{n-k+1}=0.$
			\end{center}
			Then choosing
			$A_1 =1,$ $A_2 =\dots=A_{k-1}=0$
			in Lemma~\ref{lem16}, we have that
			\begin{center}
				$\beta_{n-i}'=0, \ r+1 \leq i \leq k-1, \quad \beta_{n-k}'=\beta_{n-k} +(n-k+1) A_k.$
			\end{center}
			Choosing $A_k =-\frac{\beta_{n-k}}{n-k+1},$ we obtain $\beta_{n-k}'=0.$
			Thus, by induction, any algebra from the class $\textbf{(12)}_2 (\alpha, \beta_{1},\beta_{2},\dots,\beta_{n-1})$ is isomorphic to the algebra with parameters $\beta_{n-1}'= \alpha,$ $\beta_{n-r}' = 1$ and $\beta_{n-i}' = 0$ for all $2 \le i \ (i \neq r) \le n-1$, that is, to $\mathbb{A}_{3,r}(\alpha),$ $2\leq r\leq n-1.$
		\end{enumerate}
	\end{proof}

	\begin{lemma}\label{12-lem-alpha_2 -s} Let $\mathfrak{S}^{\alpha, \beta_{1},\dots, \beta_{n-1}}$ be an algebra in the class $\textbf{(12)}_s (\alpha, \beta_{1},\dots,\beta_{n-1})$ for some $3\leq s\leq n-1.$

		1. If $\beta_{n-1} \neq \alpha,$ then it is isomorphic to the algebra
		\begin{align*}
			\mathbb{A}_{4,s}&(\alpha,\beta_s,\dots,\beta_{n-1}): \\
            &\left\{
			\begin{array}{ll}
				e_i\cdot e_j =e_{i+j}, & 2\leq i+j \leq n, \\[1mm]
				e_i\star e_j =\alpha e_{i+j}+ e_{s+i+j-2}, & 2\leq i+j \leq s, \\[1mm]
				e_i\star e_j =\alpha e_{i+j}+ e_{s+i+j-2} + \beta_{i+j-1}e_n, & s+1 \leq i+j \leq n-s+1, \\[1mm]
				e_i\star e_j =\alpha e_{i+j} + \beta_{i+j-1}e_n, & n-s+2 \leq i+j \leq n-1, \\[1mm]
				e_i \star e_j =\beta_{n-1} e_{n}, & i+j = n.
			\end{array}
			\right.
		\end{align*}
		2. If $\beta_{n-1} = \alpha,$ $\beta_{n-i} = 0,$ $ 2 \le i \le {r-1},$ $\beta_{n-r} \neq 0,$ $2 \leq r \leq s-2,$ then it is isomorphic to the algebra
		\begin{align*}
			\mathbb{A}_{5,s,r}&(\alpha,\beta_{s-r+1},\dots,\beta_{n-r}): \\
            &\left\{
			\begin{array}{ll}
				e_i\cdot e_j = e_{i+j}, & 2\leq i+j \leq n, \\[1mm]
				e_i\star e_j =\alpha e_{i+j} + e_{s+i+j-2}, & 2\leq i+j \leq s-r+1, \\[1mm]
				e_i\star e_j =\alpha e_{i+j} + e_{s+i+j-2} +\beta_{i+j-1} e_n, & s-r+2 \leq i+j \leq n-s+1, \\[1mm]
				e_i\star e_j =\alpha e_{i+j}+ \beta_{i+j-1} e_n, & n-s+2 \leq i+j \leq n-r+1, \\[1mm]
				e_i\star e_j =\alpha e_{i+j}, & n-r+2 \leq i+j \leq n,
			\end{array}
			\right.
		\end{align*}
		where we assume what $e_t =0,$ if $t>n.$

		3. If $\beta_{n-1} = \alpha, \ \beta_{n-i} = 0, \ 2 \leq i \leq s-2, \ \beta_{n-s+1} \neq \frac{s}{2},$ then it is isomorphic to the algebra:
        \begin{align*}
			\mathbb{A}_{6,s}&(\alpha,\beta_2,\dots,\beta_{n-s+1}): \\
            &\left\{
			\begin{array}{ll}
				e_i\cdot e_j = e_{i+j}, & 2\leq i+j \leq n, \\[1mm]
				e_1\star e_1 =\alpha e_{2} + e_{s}, \\[1mm]
				e_i\star e_j =\alpha e_{i+j} + e_{s+i+j-2} +\beta_{i+j-1}e_n, & 3\leq i+j \leq n-s+1, \\[1mm]
				e_i\star e_j =\alpha e_{n-s+2} +\beta_{n-s+1}e_n, & i+j = n-s+2, \\[1mm]
				e_i\star e_j =\alpha e_{i+j}, & n-s+3 \leq i+j \leq n.
			\end{array}
			\right.
		\end{align*}
		4. If $\beta_{n-1} = \alpha,$ $\beta_{n-i} = 0,$ $2 \leq i \leq s-2,$ $\beta_{n-s+1}=\frac{s}{2},$ then it is isomorphic to the algebra:
		\[
			\mathbb{A}_{7,s}(\alpha,\beta_1,\dots,\beta_{n-s}):\left\{
			\begin{array}{ll}
				e_i\cdot e_j = e_{i+j}, & 2\leq i+j \leq n, \\[1mm]
				e_i\star e_j =\alpha e_{i+j} + e_{s+i+j-2} +\beta_{i+j-1}e_n, & 2\leq i+j \leq n-s+1, \\[1mm]
				e_i\star e_j =\alpha e_{n-s+2} +\frac{s}{2}e_n, & i+j = n-s+2, \\[1mm]
				e_i\star e_j =\alpha e_{i+j}, & n-s+3 \leq i+j \leq n.
			\end{array}
			\right.
		\]
	\end{lemma}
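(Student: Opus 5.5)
The plan is to follow the same scheme as Lemmas~\ref{12-lem-alpha_1} and~\ref{12-lem-alpha_2}. We work inside the isomorphism criterion of Lemma~\ref{lem16}, with the automorphism $\varphi$ of $(\mathbb{C}_n[x],\cdot)$ taken in the form~\eqref{Aut} of Theorem~\ref{3.1}. The first step is to preserve the normal form of the quotient $\textbf{(12)}_s(\alpha,\beta_1,\dots,\beta_{n-1})/I\cong\mathbb{B}_s(\alpha)$, which requires $\alpha_2'=\alpha$, $\alpha_s'=1$ and $\alpha_i'=0$ otherwise for $i\le n-1$. By~\eqref{alpha_s} and the computation in Lemma~\ref{intlem2}, this forces $A_1^{s-2}=1$. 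It also forces $A_2=\dots=A_{n-s}=0$, since a nonzero $A_k$ with $k\le n-s$ would reintroduce $\alpha_{s+k-1}'\neq0$. Consequently only $A_1$ (a root of unity) and the top parameters $A_{n-s+1},\dots,A_n$ remain free. I would then write out the $e_n$-coefficient equations of Lemma~\ref{lem16} with these restrictions. Since $e_n$ is central and $\varphi(e_n)=A_1^ne_n$, they should collapse to an affine action on $(\beta_1,\dots,\beta_{n-1})$ of the form
\[
\beta_i'=A_1^{\,i-1-n}\beta_i\cdot(\text{unit})+\sum_k c_{i,k}A_k,
\]
with coefficients $c_{i,k}$ that are linear in $\beta_{n-1}-\alpha$ and in the constant $1$ coming from the $e_{s+i+j-2}$ term.

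The cases then follow from which of these coefficients are nonzero. In Case~1, $\beta_{n-1}\neq\alpha$, and, exactly as in Case~1 of Lemma~\ref{12-lem-alpha_2}, the free parameters $A_{n-s+1},\dots$ have coefficient $(n-k+1)(\beta_{n-1}-\alpha)\neq0$ in $\beta_{n-k}'$. Running the induction on $k$ kills $\beta_1,\dots,\beta_{s-1}$, which are the entries governed by the available $A_k$ with $k$ large; this yields $\mathbb{A}_{4,s}$. If $\beta_{n-1}=\alpha$, that term disappears and the only surviving contribution to $\beta_{n-k}'$ comes from $A_k$ acting through $e_{s+i+j-2}$ and through the first nonzero $\beta_{n-r}$. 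The coefficient of $A_k$ in $\beta_{n-k-r+1}'$ is then a multiple of $\beta_{n-r}$, so when $\beta_{n-r}\neq0$ with $r\le s-2$ we can normalize successively and obtain $\mathbb{A}_{5,s,r}$.

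When $\beta_{n-i}=0$ for all $2\le i\le s-2$, the first candidate is $\beta_{n-s+1}$. Here both mechanisms compete: the $e_{s+i+j-2}$ term contributes a coefficient proportional to $s$ (roughly $s$ times $A_k$ from expanding $\varphi(e_1)\star\varphi(e_1)$ through $e_s$), while the $\beta_{n-s+1}$ term contributes a coefficient proportional to $2\beta_{n-s+1}$. The effective coefficient is therefore a multiple of $s-2\beta_{n-s+1}$. If $\beta_{n-s+1}\neq s/2$ we can use $A_k$ to eliminate $\beta_{n-s+1-\ell}$ for the appropriate range, which gives $\mathbb{A}_{6,s}$. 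If $\beta_{n-s+1}=s/2$ nothing can be killed beyond what the root-of-unity choice allows, and the family $\mathbb{A}_{7,s}$ remains.

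I expect the main obstacle to be deriving the exact coefficients $c_{i,k}$, and in particular verifying the threshold $s/2$. This requires expanding $G^{(m)}_n=\sum_{k_1+\dots+k_m=n}A_{k_1}\cdots A_{k_m}$ with only $A_1$ and one top $A_k$ nonzero, so that $G^{(m)}_n=mA_1^{m-1}A_k$ when $m-1+k=n$. I would first carry this out for the single triple $(e_1,e_1)$ and for $(e_1,e_i)$ with $i=n-s+1$, checking the combinatorial factor $m$ against the factor coming from $\sum_{i+j}A_iA_j$ on the right-hand side. Once these two linear coefficients are confirmed, the inductive eliminations in each case are routine copies of the proofs of Lemmas~\ref{intlem2} and~\ref{12-lem-alpha_2}.
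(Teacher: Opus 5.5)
Your plan is essentially the paper's proof. Both arguments work inside the isomorphism criterion of Lemma~\ref{lem16} with $A_2=\dots=A_{n-s}=0$; the paper simply takes $A_1=1$ rather than an $(s-2)$-th root of unity. Both then use the top parameters $A_{n-s+1},A_{n-s+2},\dots$ one at a time, with exactly the coefficients you predict: $(n-k+1)(\beta_{n-1}-\alpha)$ in Case~1, a nonzero multiple of $\beta_{n-r}$ in Case~2, and $s-2\beta_{n-s+1}$ in Case~3 (where the only entry eliminated is $\beta_1$, via $A_{n-s+1}$, giving $\mathbb{A}_{6,s}$).
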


	\begin{proof}
		Let $\mathfrak{S}^{\alpha, \beta_{1},\dots, \beta_{n-1}}$ be an algebra from the class $\textbf{(12)}_s (\alpha, \beta_{1},\beta_{2},\dots,\beta_{n-1}).$ It means that
		$\alpha_1 =0,$ $\alpha_2 =\alpha,$ $\alpha_s =1,$ $\alpha_i =0$ for all $3 \leq i \ (i \neq s) \leq n-1.$
		Moreover, $A_1 =1,$ $A_i =0$ for all $2 \leq i \leq n-s.$
		\begin{enumerate}
			\item[\textbf{Case 1:}] $\beta_{n-1} \neq \alpha, $ by choosing $A_{n-s+1} = \frac{\beta_{s-1}}{s(\alpha -\beta_{n-1})}$ in Lemma~\ref{lem16}, we obtain $\beta_{s-1}'=0.$ Hence, without loss of generality, we may assume that $\beta_{s-1}=0.$ We now prove by induction that one may further assume that $\beta_{s-i} = 0$ for all $1 \le i \le s-1$.

			The base case $i=1$ has already been established. Consider $i=2$. Assume $\beta_{s-1}=0$. Choosing $A_{n-s+1} = 0$ in Lemma~\ref{lem16}, we obtain
			\begin{center}
				$\beta_{j}'=\beta_{j}, \ s \leq j \leq n-1, \quad \beta_{s-1}'=0, \quad \beta_{s-2}'=\beta_{s-2} +(s-1) A_{n-s+2} (\beta_{n-1}-\alpha).$
			\end{center}
			By taking $A_{n-s+2}=\frac{\beta_{s-2}}{(s-1)(\alpha-\beta_{n-1})},$ we have $\beta_{s-2}'=0.$
			Assume now, for some $k \ge 2$, that we have $\beta_{s-1}= \dots= \beta_{s-k+1}=0$. Then, choosing $A_{n-s+1} =\dots=A_{n-s+k-1}=0$
			in Lemma~\ref{lem16}, we get that
			\begin{center}
				$\beta_{s-i}'=0, \ 2 \leq i \leq k-1, \quad \beta_{s-k}'=\beta_{s-k} + (s-k+1) A_{n-s+k} (\beta_{n-1}-\alpha).$
			\end{center}
			Choosing $A_{n-s+k}=\frac{\beta_{s-k}}{(s-k+1)(\alpha - \beta_{n-1})},$ we obtain $\beta_{s-k}'=0.$
			Thus, by induction, we have that in this case any algebra from the class $\textbf{(12)}_s (\alpha, \beta_{1},\beta_{2},\dots,\beta_{n-1})$ is isomorphic to the algebra with parameters $\alpha_2 ' = \alpha,$ $\alpha_s '=1,$ $\beta_{i}' = 0$ for all $1 \le i \le s-1$, and $\beta_j '=\beta_j$ for all $s \leq j \leq n-1$ that is, to $\mathbb{A}_{4,s}(\alpha,\beta_s,\dots,\beta_{n-1}).$

			\item[\textbf{Case 2:}] $\beta_{n-1} = \alpha.$ Then there exist $2 \le r \le {s-2}$ such that $\beta_{n-r} \neq 0.$ Assume that
			$\beta_{n-i}=0$ for all $2\leq i\leq r-1$ and $\beta_{n-r} \neq 0.$ Choosing $A_{n-s+1} = -\frac{\beta_{s-r}}{(s-r+1)\beta_{n-r} }$ in Lemma~\ref{lem16}, we obtain $\beta_{s-r}'=0.$
			Hence, without loss of generality, we may assume that $\beta_{s-r}=0.$
			We now prove by induction that one may further assume $\beta_{s-j-r+1} = 0$ for all $1 \le j \le s-r$.

			The base case $j=1$ has already been established. Consider $j=2$. Assume $\beta_{s-r}=0.$ Choosing $A_{n-s+1} = 0$ in Lemma~\ref{lem16}, we obtain
			\begin{center}
				$\beta_{s-r}'=0, \quad \beta_{s-r-1}'=\beta_{s-r-1} +(s-r) A_{n-s+2} \beta_{n-r}.$
			\end{center}
			By taking $A_{n-s+2}=-\frac{\beta_{s-r-1}}{(s-r) \beta_{n-r}},$ we obtain $\beta_{s-r-1}'=0.$
			Assume now that for some $k \ge 3,$ we have
			\begin{center}
				$\beta_{n-1} = \alpha,$ $\beta_{n-2}= \dots= \beta_{n-r+1} = \beta_{n-r-1}= \dots =\beta_{n-k+1}=0,$
			\end{center}
			then choosing $A_1 =1,$ $A_2 =\dots=A_{n-s+2}=0$
			in Lemma~\ref{lem16}, we have that
			\begin{center}
				$\beta_{s-i}'=0, \ r \leq i \leq k-1, \quad \beta_{s-r-k+1}'=\beta_{s-r-k+1} +(s-r-k+2) A_{n-s+k} \beta_{n-r}.$
			\end{center}
			Choosing $A_{n-s+k}=-\frac{\beta_{s-r-k+1}}{(s-r-k+2)\beta_{n-r}},$ we obtain $\beta_{s-r-k+1}'=0.$

			Thus, by induction, we conclude that, in this case, any algebra from the class $\textbf{(12)}_s (\alpha, \beta_{1},\beta_{2},\dots,\beta_{n-1})$ is isomorphic to the algebra with parameters $\beta_{n-1}'= \alpha,$ $\beta_{n-i}' = 0$ for all $2 \le i \le r-1$, and $\beta_{s-r-k+1}' = 0$ for all $1 \le k \le s-r$ that is, to $\mathbb{A}_{5,s,r}(\alpha,\beta_{s-r+1},\dots,\beta_{n-r}).$

			\item[\textbf{Case 3:}] $\beta_{n-1} = \alpha,$ $\beta_{n-i} = 0,$ $2 \le i \le {s-2},$ $\beta_{n-s+1} \neq \frac{s}{2}.$
			Choosing $A_{n-s+1} = \frac{\beta_{1}}{s - 2\beta_{n-s+1} }$ in Lemma~\ref{lem16}, we obtain $\beta_{1}'=0.$
			Then we have that in this case any algebra from the class $\textbf{(12)}_s (\alpha, \beta_{1},\beta_{2},\dots,\beta_{n-1})$ is isomorphic to the algebra $\mathbb{A}_{6,s}(\alpha,\beta_2,\dots,\beta_{n-s+1})$.

			\item[\textbf{Case 4:}] $\beta_{n-1} = \alpha,$ $ \beta_{n-i} = 0,$ $ 2 \le i \le {s-2},$ $ \beta_{n-s+1}=\frac{s}{2}.$
			Then we have that any algebra from the class $\textbf{(12)}_s (\alpha, \beta_{1},\beta_{2},\dots,\beta_{n-1})$ is isomorphic to the algebra $\mathbb{A}_{7,s}(\alpha,\beta_1,\dots,\beta_{n-s})$.
		\end{enumerate}
	\end{proof}

	\begin{theorem}\label{12-matching-algebras} Let $ \star$ be a~$ (12)$-matching structure on the associative algebra $(\mathbb{C}_n [x],\cdot)$. Then
		$\cdot$ and $\star$ are $\id$-matching or the algebra $(\mathbb{C}_n [x], \cdot, \star)$ is isomorphic to one of the following pairwise non-isomorphic algebras:
		\begin{gather*}
			\mathbb{A}_{1}(\beta_2,\dots,\beta_{n-1}); \quad \mathbb{A}_{2}(\alpha,\beta)_{\beta \neq \alpha}; \quad
			\mathbb{A}_{3,r}(\alpha), \ 2\leq r\leq n-1; \\
			\mathbb{A}_{4,s}(\alpha,\beta_s,\dots,\beta_{n-1}), \ 3\leq s\leq n-1; \\
			\mathbb{A}_{5,s,r}(\alpha,\beta_{s-r+1},\dots,\beta_{n-r}), \ 4\leq s\leq n-1, \ 2\leq r\leq s-2; \\
			\mathbb{A}_{6,s}(\alpha,\beta_2,\dots,\beta_{n-s+1}), \ \beta_{n-s+1}\neq\frac{s}{2}; \quad \mathbb{A}_{7,s}(\alpha,\beta_1,\dots,\beta_{n-s}), \ 3\leq s\leq n-1.
		\end{gather*}
	\end{theorem}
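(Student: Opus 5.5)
This is the assembling theorem for Section~\ref{12-matching-section}. I would prove it by combining Theorem~\ref{th15} with the quotient reduction and Lemmas~\ref{12-lem-alpha_1}, \ref{12-lem-alpha_2} and~\ref{12-lem-alpha_2 -s}, and then checking that the listed algebras are pairwise non-isomorphic.

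\textbf{Step 1: reduction to the family $\textbf{(12)}$.} Theorem~\ref{th15} says a $(12)$-matching $\star$ is either $\id$-matching with $\cdot$, or lies in $\textbf{(12)}(\alpha_1,\dots,\alpha_{n-1},\beta_1,\dots,\beta_{n-1})$ with $\beta_n=0$ and $\beta_i\ne\alpha_{n-i+1}$ for some $i$.

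\textbf{Step 2: normalising the $\alpha$-part.} In the second case, $I=\operatorname{span}\{e_n\}$ is a central ideal, so every automorphism of $(\mathbb{C}_n[x],\cdot)$ from Theorem~\ref{3.1} induces one of $\mathbb{C}_{n-1}[x]$. By Lemma~\ref{Lemma10}, the $\alpha$-equations of Lemma~\ref{lem16} for $1\le t\le n-1$ coincide with those of $Id(\alpha_1,\dots,\alpha_{n-1})$. Applying Theorem~\ref{int} to the quotient, we may assume the $\alpha$-part is that of $\mathbb{B}_1$, $\mathbb{B}_2(\alpha)$ or $\mathbb{B}_s(\alpha)$ with $3\le s\le n-1$. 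This gives exactly the three families $\textbf{(12)}_1$, $\textbf{(12)}_2$, $\textbf{(12)}_s$. The non-$\id$-matching condition $\beta_i\neq\alpha_{n-i+1}$ becomes the stated exclusions on the $\beta$'s.

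\textbf{Step 3: normalising the $\beta$-part.} After fixing the $\alpha$-part, the remaining freedom consists of the automorphisms whose parameters $A_i$ preserve the normalised $\alpha$'s. These act only on the $\beta$'s, through the $e_n$-equations of Lemma~\ref{lem16}. The three lemmas handle this:
\begin{itemize}
\item Lemma~\ref{12-lem-alpha_1} gives $\mathbb{A}_1$.
\item Lemma~\ref{12-lem-alpha_2} gives $\mathbb{A}_2$ and $\mathbb{A}_{3,r}$.
\item Lemma~\ref{12-lem-alpha_2 -s} gives $\mathbb{A}_{4,s}$ through $\mathbb{A}_{7,s}$.
\end{itemize}
The case splits in these lemmas, namely $\beta_{n-1}\ne\alpha$ versus $\beta_{n-1}=\alpha$ together with the first nonzero $\beta_{n-r}$, exhaust all possibilities, so every algebra lands in the list.

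\textbf{Step 4: pairwise non-isomorphism.} First, algebras in different $\textbf{(12)}_\ast$ families are non-isomorphic. The isomorphism class of the quotient by $I$ is an invariant, since $I$ is canonical: $I=(\mathbb{C}_n[x])^n$ for $\cdot$. So the quotient type $\mathbb{B}_1$, $\mathbb{B}_2(\alpha)$ or $\mathbb{B}_s(\alpha)$, including $s$ and $\alpha$, is an invariant.

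Within a family, I would extract further invariants from the stabiliser equations of Lemma~\ref{lem16}:
\begin{itemize}
\item whether $\beta_{n-1}=\alpha$; the coefficient $\beta_{n-1}$ is unchanged under the stabiliser;
\item the index $r$ of the first nonzero $\beta_{n-r}$;
\item whether $\beta_{n-s+1}=s/2$, which decides between $\mathbb{A}_{6,s}$ and $\mathbb{A}_{7,s}$.
\end{itemize}
These separate the cases. Then I would check that the residual stabiliser of each normal form fixes the remaining parameters. Here the earlier computations matter: for example, in $\mathbb{A}_1$ the stabiliser forces $A_1=1$ and $A_2=\dots=A_{n-1}=0$. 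The surviving $\beta$'s are therefore genuine invariants.

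\textbf{Main obstacle.} This last uniqueness step is the hard part. The displayed relations of Lemma~\ref{lem16} are messy, and the families $\mathbb{A}_{4,s}$, $\mathbb{A}_{5,s,r}$, $\mathbb{A}_{7,s}$ keep many free parameters. One must verify that no residual automorphism, in particular a nontrivial scaling $A_1$ where allowed, identifies distinct parameter values. My first approach would be to restrict to the stabiliser of the $\alpha$-normal form, show it is unipotent-like with $A_1$ constrained, and read off directly how it transforms each surviving $\beta_j$.
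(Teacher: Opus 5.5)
Your Steps 1--3 are exactly the paper's proof. The paper proves this theorem only by combining Theorem~\ref{th15}, the reduction modulo the central ideal $I=\operatorname{span}\{e_n\}$ to Theorem~\ref{int}, and the three normalisation lemmas; it merely asserts pairwise non-isomorphism. The trouble is your Step 4, and it is not a matter of messy bookkeeping: the uniqueness you plan to verify is false once $s\ge 4$.

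In the family $\textbf{(12)}_s$, the automorphisms preserving the normalised $\alpha$-part ($\alpha_1=0$, $\alpha_2=\alpha$, $\alpha_s=1$, all other $\alpha_i=0$) are \emph{not} only those with $A_1=1$, $A_2=\dots=A_{n-s}=0$, as the paper's lemma on $\textbf{(12)}_s$ assumes. The $t=s$ relation of Lemma~\ref{lem16} reads $A_1^{s}=A_1^{2}$, so only $A_1^{s-2}=1$ is forced. Concretely, for $\zeta^{s-2}=1$ the diagonal map $\varphi(e_i)=\zeta^ie_i$ is an automorphism of $\cdot$. Compare $\varphi(e_i\star' e_j)$ with $\varphi(e_i)\star\varphi(e_j)=\zeta^{i+j}\,e_i\star e_j$. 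The $\alpha_2$-terms match, and the $\alpha_s$-terms match because $\zeta^{s-2}=1$. The $e_n$-coefficients match exactly when $\beta'_k=\zeta^{k+1-n}\beta_k$.

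This rescaling preserves every invariant you propose: $\beta_{n-1}$, the position of the first nonzero $\beta_{n-r}$, and $\beta_{n-s+1}$ (since $\zeta^{2-s}=1$). It also preserves the vanishing patterns of the normal forms. So it acts inside each of $\mathbb{A}_{4,s}$, $\mathbb{A}_{5,s,r}$, $\mathbb{A}_{6,s}$, $\mathbb{A}_{7,s}$. The residual stabiliser is therefore not unipotent-like, and the surviving $\beta$'s are invariants only up to this action. Your argument does go through for $\mathbb{A}_1$, $\mathbb{A}_2$, $\mathbb{A}_{3,r}$, and for $s=3$, where $\zeta=1$.

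Here is a concrete instance. Take $n=6$, $s=4$, $r=2$. The algebra $\mathbb{A}_{5,4,2}(\alpha,\beta_3,\beta_4)$, with $\beta_4\neq 0$, has $e_i\star e_j$ equal to $\alpha e_2+e_4$, $\alpha e_3+e_5$, $\alpha e_4+\beta_3e_6$, $\alpha e_5+\beta_4e_6$, $\alpha e_6$ for $i+j=2,3,4,5,6$ respectively. With $\varphi(e_i)=(-1)^ie_i$, the product $\varphi^{-1}(\varphi(e_i)\star\varphi(e_j))$ is the same table with $\beta_4$ replaced by $-\beta_4$. Hence $\mathbb{A}_{5,4,2}(\alpha,\beta_3,\beta_4)\cong\mathbb{A}_{5,4,2}(\alpha,\beta_3,-\beta_4)$. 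Likewise, for $n=7$ one gets $\mathbb{A}_{4,4}(\alpha,\beta_4,\beta_5,\beta_6)\cong\mathbb{A}_{4,4}(\alpha,\beta_4,-\beta_5,\beta_6)$.

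So the pairwise non-isomorphism part of the statement cannot be proved as written. What your stabiliser method actually yields is non-isomorphism modulo the action $\beta_k\mapsto\zeta^{k+1-n}\beta_k$ of the group of $(s-2)$-th roots of unity.

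There is also a smaller inaccuracy in Step 2: the stated exclusions are not exactly the non-$\id$-matching condition.
\begin{itemize}
\item In $\textbf{(12)}_1$ no exclusion is needed, since $\beta_n=0\neq\alpha_1$.
\item $\mathbb{A}_{3,n-1}(\alpha)$ coincides with $\mathbb{B}_n(\alpha)$, and $\mathbb{A}_{6,s}(\alpha,0,\dots,0,1)$ coincides with $\mathbb{B}_s(\alpha)$; both are $\id$-matching.
\end{itemize}
This is harmless for the inclusive ``or'', but $r$ in $\mathbb{A}_{3,r}$ should only run up to $n-2$.
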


	\begin{proof}
		Let $ \star$ be a~$ (12)$-matching structure with $\cdot$ defined on the associative algebra $(\mathbb{C}_n [x],\cdot)$. Then Theorem~\ref{int} implies that it is isomorphic to one of the following pairwise non-isomorphic algebras:
		\begin{center}
			$\textbf{(12)}_1 (\beta_{1},\dots, \beta_{n-1}), \ \textbf{(12)}_2 (\alpha,\beta_{1},\dots, \beta_{n-1}), \ \textbf{(12)}_s (\alpha,\beta_{1},\dots, \beta_{n-1}), \ 3\leq s\leq n-1.$
		\end{center}
		Moreover,
		\begin{itemize}
			\item By Lemma~\ref{12-lem-alpha_1}, the algebra $\textbf{(12)}_1 (\beta_{1},\dots, \beta_{n-1})$ is isomorphic to $\mathbb{A}_{1}(\beta_2,\dots,\beta_{n-1});$
			\item By Lemma~\ref{12-lem-alpha_2}, the algebra $\textbf{(12)}_2 (\alpha,\beta_{1},\dots, \beta_{n-1})$ is isomorphic to one of the following mutually non-isomorphic algebras:
			$\mathbb{A}_{2}(\alpha,\beta)_{\beta \neq \alpha},$ or $ \mathbb{A}_{3,r}(\alpha), \quad 2\leq r\leq n-1;$

			\item By Lemma~\ref{12-lem-alpha_2 -s}, the algebra $\textbf{(12)}_s (\alpha,\beta_{1},\dots, \beta_{n-1}), \ 3\leq s\leq n-1$ is isomorphic to one of the following mutually non-isomorphic algebras:
			\begin{center}
				$\mathbb{A}_{4,s}(\alpha,\beta_s,\dots,\beta_{n-1}), \ 3\leq s\leq n-1;$ \\
				$\mathbb{A}_{5,s,r}(\alpha,\beta_{s-r+1},\dots,\beta_{n-r}), \ 4\leq s\leq n-1, \ 2\leq r\leq s-2;$\\
				$\mathbb{A}_{6,s}(\alpha,\beta_2,\dots,\beta_{n-s+1}), \quad \mathbb{A}_{7,s}(\alpha,\beta_1,\dots,\beta_{n-s}), \ 3\leq s\leq n-1.$
			\end{center}
		\end{itemize}
	\end{proof}

    %%% REFERENCES %%%
    \end{document}